\newtheorem{theorem}{Theorem}[section]
\newtheorem{definition}{Definition}[section]
\newtheorem{lemma}{Lemma}[section]
\newtheorem{proposition}{Proposition}[section]
\newtheorem{remark}{Remark}[section]
\newtheorem{corollary}{Corollary}[section]
\def\theequation{\arabic{section}.\arabic{equation}}\makeatother
\begin{document}
\title{
{\bf\Large  On Bifurcation of Solutions of the Yamabe Problem in Product Manifolds with Minimal Boundary}}
\author{{\bf\large Elkin Dario C\'ardenas Diaz}\footnote{The author is partially sponsored by CAPES-Brazil via Instituto de Matem\'atica e Estat\'istica da Universidade de S\~ao Paulo.} \vspace{1mm}\\
{\it\small Departamento de Matem\'aticas}\\ {\it\small Universidad del Cauca},
{\it\small Popay\'an, Colombia, 190003}\\
{\it\small e-mail: ecardenas@unicauca.edu.co, elkindarioc@gmail.com}\vspace{1mm} \\
{\bf\large Ana Cl\'audia da Silva Moreira}\vspace{1mm}\\
{\it\small Departamento de Matem\'atica}\\ {\it\small Universidade de S\~ao Paulo},
{\it\small S\~ao Paulo, Brazil, 05508-090}\\
{\it\small e-mail: amoreira@ime.usp.br, anaclaudia77br@gmail.com}}\vspace{1mm}
\maketitle
\begin{center}
{\bf\small Abstract}
\vspace{3mm}
\hspace{.05in}\parbox{4.5in}
{{\small In this paper, we study multiplicity of solutions of the Yamabe problem on product manifolds with minimal boundary via bifurcation theory.
}}
\end{center}
\noindent
{\it \footnotesize 2010 Mathematics Subject Classification}. {\scriptsize 53C20, 58E11, 58J32.}\\
{\it \footnotesize Key words}. {\scriptsize Yamabe problem, bifurcation theory.}

\section{\bf Introduction}
\def\theequation{1.\arabic{equation}}\makeatother
\setcounter{equation}{0}

Geometers have been interested in finding canonical metrics on a Riemannian manifold for years. Probably, the best known problem on this topic is the Uniformization Problem, for compact surfaces, that assures that there exists a metric of constant Gaussian Curvature in each conformal class. For higher dimensions, a Japanese mathematician named Hidehiko Yamabe proposed a question, later known as Yamabe's Problem:

\textit{Let $(M,g_{0})$ be a compact Riemannian manifold (without boundary) of dimension $m \geq 3$. Is there a metric $g$, conformal to $g_{0}$, having constant scalar curvature?}

In 1960, it was published an article \cite{YA60}, authored by Yamabe, with a proof of the statement. But in 1968, Trudinger found an error in Yamabe's proof, and gave a complete argument for the existence of a solution to the problem for non-positive scalar curvature \cite{TR68}. Only in 1989, the combined work of Aubin \cite{AU76}, Trudinger \cite{TR68}, and Schoen \cite{SC84}, \cite{SC89} lead a complete proof of the existence of a solution for the Yamabe problem in its full generality. It is known that the critical points of the Hilbert-Einstein functional $F$, restricted to the set of metrics  on $M$ having unit volume, are the Einstein metrics of volume 1 on $M$; and the critical points of $F$, restricted to the metrics conformal to $g$ with volume 1, are the constant scalar curvature metrics of volume 1.

From then on, many questions have been raised about uniqueness and multiplicity of solutions, compactness and non-compactness of the set of solutions and so on. Moreover, variations of the problem have been proposed by several authors. Nowadays, a great amount of results on the Yamabe problem and its generalizations can be found in the literature. Our starting problem originates from an article of Lima, Piccione and Zedda \cite{PI12}, where the authors studied local rigidity and multiplicity of constant scalar curvature metrics in compact product manifolds, using a bifurcation result. Given a family $\{g_{s}\}_{s \in [a,b]}$ of solutions for the Yamabe Problem, on the product manifold, the existence of a bifurcation instant $s_{*}$ gives an entirely new sequence $({g}_{n})_{n > 0}$ of solutions! The authors proved that if $g_{s_{*}}$ is a degenerate critical point of $F$, restricted to the conformal class (with unitary volume), $s_{*}$ is a candidate to a bifurcation instant. In fact, except for a finite number of instants $s$ for which $g_{s}$ is degenerate, all the others are bifurcation instants.

Bifurcation techniques in the Yamabe problem have been used in the case of collapsing Riemannian manifolds, and in the case of homogeneous metrics on spheres, see references \cite{BE13} e \cite{PI13}. There are several possible formulations of the  Yamabe problem in manifolds with boundary $(M, \overline{g})$. Here we consider the following:
\textit{Let $(M, \overline{g})$ be a compact Riemannian manifold with boundary of dimension $m \geq 3$. The Yamabe problem in  $(M, \overline{g})$ consists in finding a metric $\tilde{g}$, conformal to $\overline{g}$, for which $M$ has constant scalar curvature and $\partial M$ has vanishing mean curvature.}
The question of existence and other aspects of solutions for the above problem have been studied in \cite{AL10}, \cite{DI14}, \cite{ES92a}, \cite{ES92b}, \cite{HA99}, among others. 
Writing a metric conformal to $\overline g$ as $\tilde{g} = u^{\frac{4}{n-2}} \overline{g}$, then $\tilde g$ is a solution with constant scalar curvature and vanishing mean curvature of the Yamabe problem on manifolds with boundary, if and only if the function $u$ is a solution to the Neumann problem:
$$
\left\{
\begin{array}{c}
\displaystyle\frac{4(m-1)}{m-2} \Delta_{\overline{g}} u + R_{\overline{g}}u - K u ^{\frac{m+2}{m-2}} = 0,\ \ \mbox{em } M \\
(m-1) \partial_{\eta_{\overline{g}}} u + \displaystyle\frac{m-2}{2} H_{\overline{g}} u =  0 \ \ \mbox{em } \partial{M}
\end{array}
\right. $$
where $K$ is constant, $\Delta_{\overline{g}}$ is the Laplacian operator of $\overline{g}$, $R_{\overline{g}}$ is the scalar curvature of $\overline{g}$, $H_{\overline{g}}$ is the mean curvature of $\partial M$ relatively to $\overline{g}$, and $\eta_{\overline{g}}$ is the $\overline{g}$-unit normal field along $\partial M$ pointing inside of $M$. Solving the above problem is equivalent to finding critical points for the Hilbert-Einstein functional, $F: \mathcal{M}^{k, \alpha}(M) \longrightarrow \mathbb{R},$ defined by
$$F(g) = \displaystyle\int_M R_g \omega_g,$$
restricted to the set of metrics in the conformal class of $\overline{g}$ having unit volume.

The aim of  this paper is to determine multiplicity of solutions of the Yamabe problem in manifolds obtained as a product of a compact manifold (without boundary) and a compact manifold with boundary, via bifurcation theory. 
We consider a setup similar to the problem studied in reference \cite{PI12}. We consider here the case of manifolds with boundary, which introduces new elements with respect to the theory developed in \cite{PI12}: given $(M_{1}, g^{(1)})$, a compact Riemannian manifold with $\partial M_{1} = \emptyset$ and $(M_{2}, \overline{g}^{(2)})$ a compact Riemannian manifold with minimal boundary, both having constant scalar curvature, consider the product manifold, $M = M_{1} \times M_{2}$, whose boundary is given by $\partial M = M_{1} \times \partial M_{2}$. Let $m_{1}$ and $m_{2}$ be the dimensions of $M_{1}$ and $M_{2}$, respectively, and assume that $\mbox{dim}(M) = m = m_{1} + m_{2} \geq 3$. For each $s \in (0, + \infty)$, define ${\overline{g}}_{s} = {g}^{(1)} \oplus s {\overline{g}}^{(2)}$ a family of metrics on $M$. Then $(M, \overline{g}_{s})$ has constant scalar curvature, and the mean curvature of $\partial M$ relatively to $\overline g_s$ vanishes for all $s>0$.

The main difficulty faced in the transition from the case without boundary to the case with boundary was to find a result of uniqueness (to ensure rigidity) that can be well adapted to the case with boundary. We know several results about uniqueness. Escobar \cite{ES90}, for instance, proved that if there is a metric $\tilde{g}$ with constant scalar curvature and vanishing mean curvature in the $\mathcal{C}^{k, \alpha}$-conformal class of an Einstein metric $\overline{g}$, then $\tilde{g}$ is Einstein. Moreover, if $\overline{g}$ is not conformal to the round metric, $\tilde{g}$ is unique, except for homothety. We find more appropriate to adapt the result, proved by Lima, Piccione and Zedda \cite{LI12}, which states that every $\mathcal{C}^{k, \alpha}$-conformal class of metrics of volume one has at most one metric of volume one and constant scalar curvature in a neighborhood of a non-degenerate metric with constant scalar curvature.

The main results in this paper, Theorem~\ref{thm:principal} and Theorem~\ref{thm:secondmain},  state that, when the scalar curvature of each factor $(M_i,g^{(i)})$ is positive, $i=1,2$, then  there are two sequences (one tending to zero and the other one to $+\infty$) of instants $s_{*} \in (0, +\infty)$ such that the corresponding metric $\overline g_{s_*}$ is the limit of a sequence of metrics of distinct solutions of the Yamabe problem. Precise definitions of these bifurcating branches of solutions will be given below.
In particular, we have multiplicity of solutions to the Yamabe problem on the normalized conformal classes of these new metrics. For all other values of $s$, the family is locally rigid, which means that, locally, the metrics of the family are the unique solutions to the Yamabe problem, up to homotheties.

The paper is organized as follows: in Section 2, we study the variational framework used for the bifurcation result. This framework is given by considering the Hilbert-Einstein functional restricted to the normalized conformal classes.  In Section 3 we discuss some results about rigidity and bifurcation that are used to obtain the conclusions of this work. Finally, in Section 4, we can verify that, also in the case of manifolds with boundary, essentially the same results obtained by Lima, Piccione and Zedda \cite{PI12} remain valid.

The authors gratefully acknowledge the supervision of Prof. Paolo Piccione throughout the development of this work.

\section{\bf General Settings}
\def\theequation{2.\arabic{equation}}\makeatother
\setcounter{equation}{0}

\subsection{Manifolds and Conformal Classes}

Let $(M, \overline{g})$ be a $m$-dimensional oriented compact Riemannian manifold with boundary $\partial M \neq \emptyset$, $m \geq 3$. As the metric $\overline{g}$ induces inner products and norms in all spaces of tensors on $M$ and Levi-Civita connection $\overline{\nabla}$ of $\overline{g}$ induces a connection in all vector spaces of tensor fields on $M$, the space $\Gamma^{k} (T^*M \otimes T^*M)$ of $\mathcal{C}^{k}$-sections of the vector bundle $T^*M \otimes T^*M$ of symmetric $(0,2)$-tensors of class $\mathcal{C}^k$ on $M$ is a Banach space with the norm
$$\left\| \tau \right\|_{\mathcal{C}^{k}} = \max_{j=0, \ldots, k} \left[ \max_{p \in M} \left\| \overline{\nabla}^{(j)} \tau(p) \right\|_{\overline{g}} \right],$$
and therefore, it is a Banach manifold.

Given $k \geq 3$ and $\alpha \in (0,1]$, denote by $\mathcal{M}^{k, \alpha}(M)$ the set of all Riemannian metrics of class $\mathcal{C}^{k, \alpha}$ on $M$, in the sense that the coefficients of the metrics are $\mathcal{C}^{k, \alpha}$-functions on $M$.
The set $\mathcal{M}^{k, \alpha}(M)$ is an open cone of $\Gamma^{k, \alpha} (T^*M \otimes T^*M)$, so this is a Banach manifold itself and $T_{\overline{g}}\mathcal{M}^{k, \alpha}(M) = \Gamma^{k, \alpha} (T^*M \otimes T^*M)$, for a metric $\overline{g} \in \mathcal{M}^{k, \alpha}(M)$.

Consider the open subset of the Banach space $\mathcal{C}^{k, \alpha}(M)$,
$$\mathcal{C}_{+}^{k, \alpha}(M) = \left\{ \phi \in \mathcal{C}^{k, \alpha}(M) : \phi > 0 \right\}.$$
Now, for each $\overline{g} \in \mathcal{M}^{k, \alpha}(M)$, define the $\mathcal{C}^{k, \alpha}$-conformal class of $\overline{g}$ by
$$[\overline{g}] \colon = \{ \phi \overline{g} : \phi \in \mathcal{C}^{k, \alpha}_{+}(M) \}.$$

\begin{proposition}
	The $\mathcal{C}^{k, \alpha}$- conformal class of a Riemannian metric $\overline{g} \in \mathcal{M}^{k, \alpha}(M)$ is an embedded submanifold of $\mathcal{M}^{k, \alpha}(M)$.
\end{proposition}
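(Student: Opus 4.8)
The plan is to realize the conformal class $[\overline g]$ as the zero set of a submersion, which automatically makes it an embedded submanifold. First I would introduce the pointwise $\overline g$-trace on symmetric $(0,2)$-tensors. Since $\overline g \in \mathcal{M}^{k,\alpha}(M)$, its inverse components $\overline g^{ij}$ are again of class $\mathcal{C}^{k,\alpha}$ (matrix inversion is a smooth operation on invertible matrices and $\mathcal{C}^{k,\alpha}$ is a Banach algebra under multiplication), so the map $\mathrm{tr}_{\overline g}\colon \Gamma^{k,\alpha}(T^*M\otimes T^*M) \to \mathcal{C}^{k,\alpha}(M)$, $\tau \mapsto \overline g^{ij}\tau_{ij}$, is a bounded linear operator. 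Using it I would define the continuous linear projection $P(\tau) = \tau - \tfrac{1}{m}(\mathrm{tr}_{\overline g}\tau)\,\overline g$ onto the subspace of $\overline g$-trace-free tensors, where $m = \dim M$.

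The key algebraic observation is the identity $[\overline g] = \mathcal{M}^{k,\alpha}(M) \cap \ker P$. Indeed, if $g = \phi\overline g$ with $\phi > 0$ then $\mathrm{tr}_{\overline g} g = m\phi$, whence $P(g) = \phi\overline g - \phi\overline g = 0$; conversely, if $g \in \mathcal{M}^{k,\alpha}(M)$ satisfies $P(g) = 0$, then $g = \tfrac{1}{m}(\mathrm{tr}_{\overline g} g)\,\overline g = \phi\,\overline g$, and positive-definiteness of $g$ forces $\phi = \tfrac{1}{m}\mathrm{tr}_{\overline g} g > 0$ (the $\overline g$-trace of a positive-definite tensor is a sum of positive eigenvalues), so $g \in [\overline g]$. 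Thus $[\overline g]$ is precisely the zero set of $\Psi := P\big|_{\mathcal{M}^{k,\alpha}(M)}$, regarded as a map into the Banach space $\mathcal{F} := \mathrm{im}\,P$ of $\overline g$-trace-free $\mathcal{C}^{k,\alpha}$-tensors.

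Next I would check that $\Psi$ is a submersion at each of its zeros. Since $\Psi$ is the restriction of the linear projection $P$ to the open cone $\mathcal{M}^{k,\alpha}(M)$, its differential at every point equals $P\colon \Gamma^{k,\alpha}(T^*M\otimes T^*M) \to \mathcal{F}$, which is surjective. Its kernel is the conformal subspace $\mathcal{K} := \{f\overline g : f \in \mathcal{C}^{k,\alpha}(M)\}$, and the decomposition $\tau = \tfrac{1}{m}(\mathrm{tr}_{\overline g}\tau)\,\overline g + P(\tau)$ exhibits a topological direct sum $\Gamma^{k,\alpha}(T^*M\otimes T^*M) = \mathcal{K}\oplus\mathcal{F}$, both projections being bounded. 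This is the crucial point: an embedded submanifold in the Banach setting requires the tangent model to split, and it is exactly the trace decomposition that supplies the topological complement. With the splitting established, $d\Psi$ is surjective with split kernel, so $\Psi$ is a submersion, $0$ is a regular value, and the regular-value theorem yields that $[\overline g] = \Psi^{-1}(0)$ is an embedded submanifold of $\mathcal{M}^{k,\alpha}(M)$ with tangent space $T_{\overline g}[\overline g] = \mathcal{K} \cong \mathcal{C}^{k,\alpha}(M)$.

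The main obstacle is the functional-analytic bookkeeping rather than the algebra: confirming that $\mathrm{tr}_{\overline g}$ and multiplication by $\overline g$ are bounded operators preserving the Hölder class $\mathcal{C}^{k,\alpha}$, so that $P$, $\mathcal{K}$ and $\mathcal{F}$ genuinely live in the right Banach spaces and the splitting is topological, not merely algebraic. Equivalently, one could argue that the parametrization $\Phi\colon \mathcal{C}^{k,\alpha}_+(M) \to \mathcal{M}^{k,\alpha}(M)$, $\Phi(\phi) = \phi\overline g$, is a topological embedding with split injective differential, its continuous inverse being $g \mapsto \tfrac{1}{m}\mathrm{tr}_{\overline g} g$; but the level-set formulation renders the submanifold structure most transparent.
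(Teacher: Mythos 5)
Your proof is correct, but it packages the argument in the opposite direction from the paper. The paper \emph{parametrizes} the conformal class: it takes the injective map $\mathcal{I}_{\overline g}\colon \mathcal{C}^{k,\alpha}_{+}(M)\to\mathcal{M}^{k,\alpha}(M)$, $\phi\mapsto\phi\overline g$, observes that its differential $\psi\mapsto\psi\overline g$ admits the bounded left inverse $h\mapsto\tfrac{1}{m}\operatorname{tr}_{\overline g}h$, and concludes (via a footnoted splitting lemma) that $[\overline g]=\operatorname{Im}\mathcal{I}_{\overline g}$ is an embedded submanifold --- exactly the ``equivalent'' parametrization argument you mention in your closing sentences. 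You instead realize $[\overline g]$ as a \emph{level set}: you form the bounded projection $P(\tau)=\tau-\tfrac{1}{m}(\operatorname{tr}_{\overline g}\tau)\,\overline g$, prove the set identity $[\overline g]=\mathcal{M}^{k,\alpha}(M)\cap\ker P$ (the converse inclusion correctly using that the $\overline g$-trace of a positive-definite tensor is pointwise positive), and invoke the regular value theorem for the submersion $\Psi=P|_{\mathcal{M}^{k,\alpha}(M)}$. Both proofs rest on the identical functional-analytic fact --- the trace decomposition splits $\Gamma^{k,\alpha}(T^{*}M\otimes T^{*}M)$ topologically as $\{f\overline g : f\in\mathcal{C}^{k,\alpha}(M)\}\oplus(\text{trace-free tensors})$ --- but the theorems that convert it into embeddedness differ. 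Your level-set route buys a cleaner conclusion: the regular value theorem delivers embeddedness and the tangent space $T_{\overline g}[\overline g]=\ker P\cong\mathcal{C}^{k,\alpha}(M)$ in one stroke, with no separate need to verify that the parametrization is a homeomorphism onto its image (a point the paper's immersion argument settles only implicitly, through the globally defined continuous left inverse $\mathcal{J}_{\overline g}$). The paper's route, in exchange, directly identifies $[\overline g]$ with the open cone $\mathcal{C}^{k,\alpha}_{+}(M)$, which is the chart it actually uses later (for $T_{\overline g}[\overline g]$, for $[\overline g]_{1}$, and for the normalized classes); your extra cost is the verification of $[\overline g]=\Psi^{-1}(0)$, which you carry out correctly.
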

\noindent
\begin{proof}
\noindent
	Given $\overline{g} \in \mathcal{M}^{k, \alpha}(M)$. Consider the injective map
	$$
	\begin{array}{rcl}
	\mathcal{I}_{\overline{g}} : \mathcal{C}^{k, \alpha}_{+}(M) & \longrightarrow & \mathcal{M}^{k, \alpha}(M) \\
	\phi & \longmapsto & \phi \overline{g}
	\end{array}
	$$
	whose differential $\left( \mbox{d}\mathcal{I}_{\overline{g}} \right)_{\phi} : \mathcal{C}^{k, \alpha}(M)  \longrightarrow  \Gamma^{k, \alpha}(T^*M \otimes T^*M)$
	is injector and has a left inverse given by the linear bounded operator
	$$
	\begin{array}{rcl}
	\mathcal{J}_{\overline{g}} : \Gamma^{k, \alpha}(T^*M \otimes T^*M) & \longrightarrow & \mathcal{C}^{k, \alpha}(M) \\
	h & \longmapsto & \displaystyle\frac{1}{m} \mbox{tr}_{\overline{g}} h;
	\end{array}
	$$
	consequently \footnote{Let $T:E\longrightarrow F$ be a linear bounded operator between Banach spaces. If $T$ admits a continuous right-inverse $S:F\longrightarrow E$, then, $\mbox{ker }T$ admits a closed complement; moreover, this complement is $\mbox{Im }S$.
	}, the image $\mbox{Im } d\mathcal{I}_{\overline{g}}$ has a closed complement in $\Gamma^{k, \alpha}(T^*M \otimes T^*M)$ and $\mbox{Im} \mathcal{I}_{\overline{g}} = [\overline{g}]$ is an embedded submanifold of $\mathcal{M}^{k, \alpha}(M)$.
\end{proof}

In particular, $[\overline{g}]$ is a Banach manifold with differential structure induced by $\mathcal{C}^{k, \alpha}(M)$ and its tangent space is
$$T_{\overline{g}}[\overline{g}] = \{\psi \overline{g} : \psi \in \mathcal{C}^{k, \alpha}(M) \},$$
which can be identified with $\mathcal{C}^{k, \alpha}(M)$.

For each $\overline{g} \in \mathcal{M}^{k, \alpha}(M)$, denote by $Ric_{\overline{g}}$ the Ricci curvature and $R_{\overline{g}}$ the scalar curvature with respect to $\overline{g}$. Let $\eta_{\overline{g}}$ be the unitary (inward) vector field normal to $\partial M$; denote by $H_{\overline{g}}$ the mean curvature of the boundary, induced by $\overline{g}$. They are $\mathcal{C}^{{k-2}, \alpha}$-functions, $Ric_{\overline{g}}$ and $R_{\overline{g}}$ defined on $M$ and $H_{\overline{g}}$ on $\partial M$. Let $\omega_{\overline{g}}$ be the volume form on $M$, with respect to $\overline{g}$, and $\sigma_{\overline{g}}$ the volume form induced on $\partial M$.

The volume function defined on $\mathcal{M}^{k, \alpha}(M)$ is
$$\mathcal{V}(\overline{g}) \colon = \displaystyle\int_{M} \ \omega_{\overline{g}}.$$
Observe that $\mathcal{V}$ is smooth and for each $\overline{g} \in \mathcal{M}^{k, \alpha}(M)$ and $h \in T_{\overline{g}}\mathcal{M}^{k, \alpha}(M)$ its differential is given by
\begin{equation}\label{eq:derVol}
(d\mathcal{V})_{\overline{g}}(h) = \displaystyle\frac{1}{2} \displaystyle\int_{M} \mbox{tr}_{\overline{g}}(h) \ \omega_{\overline{g}}.
\end{equation}

We define
$$\mathcal{M}^{k, \alpha}(M)_{1} \colon = \big\{ \overline{g} \in \mathcal{M}^{k, \alpha}(M) : \mathcal{V}(\overline{g}) = 1 \big\}$$
to be the subset of unitary volume metrics in $\mathcal{M}^{k, \alpha}(M)$.

\begin{proposition}\label{prop:emb}
	$\mathcal{M}^{k, \alpha}(M)_{1}$ is a smooth embedded submanifold of $\mathcal{M}^{k, \alpha}(M)$.
\end{proposition}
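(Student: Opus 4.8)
The plan is to realize $\mathcal{M}^{k,\alpha}(M)_1$ as the preimage of a regular value of the smooth volume functional and then to invoke the regular value theorem for Banach manifolds. Since $\mathcal{V}\colon \mathcal{M}^{k,\alpha}(M)\to\mathbb{R}$ is smooth and $\mathcal{M}^{k,\alpha}(M)_1=\mathcal{V}^{-1}(\{1\})$, it suffices to show that $1$ is a regular value of $\mathcal{V}$; that is, that for every $\overline{g}$ with $\mathcal{V}(\overline{g})=1$ the differential $(d\mathcal{V})_{\overline{g}}\colon T_{\overline{g}}\mathcal{M}^{k,\alpha}(M)\to\mathbb{R}$ is surjective and its kernel splits. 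In the Banach setting the splitting condition is the point one must not skip, but as I explain below it comes for free here because the target is one-dimensional.

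The first key step is surjectivity of $(d\mathcal{V})_{\overline{g}}$. Using the formula \eqref{eq:derVol}, I would simply test the differential on the tangent vector $h=\overline{g}\in\Gamma^{k,\alpha}(T^*M\otimes T^*M)$. Since $\mathrm{tr}_{\overline{g}}(\overline{g})=m=\dim M$, this gives
$$
(d\mathcal{V})_{\overline{g}}(\overline{g})=\frac{1}{2}\int_M \mathrm{tr}_{\overline{g}}(\overline{g})\,\omega_{\overline{g}}=\frac{m}{2}\,\mathcal{V}(\overline{g})=\frac{m}{2}\neq 0 .
$$
Hence $(d\mathcal{V})_{\overline{g}}$ is a nonzero bounded linear functional, and a nonzero functional into $\mathbb{R}$ is automatically surjective. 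This already shows that $1$ is attained as a value with nonvanishing differential at each point of the level set.

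The second step is to check that $\ker (d\mathcal{V})_{\overline{g}}$ admits a closed complement. By continuity of $(d\mathcal{V})_{\overline{g}}$ its kernel is a closed hyperplane, and the line $\mathbb{R}\,\overline{g}$ is a closed complement: indeed the map $t\mapsto \tfrac{2t}{m}\,\overline{g}$ is a continuous right inverse of $(d\mathcal{V})_{\overline{g}}$, so the same argument recorded in the footnote of the previous proposition applies and the kernel splits. With surjectivity and splitting established, the regular value theorem for maps between Banach manifolds yields that $\mathcal{M}^{k,\alpha}(M)_1=\mathcal{V}^{-1}(\{1\})$ is a smooth embedded submanifold of codimension one, whose tangent space at $\overline{g}$ is
$$
T_{\overline{g}}\mathcal{M}^{k,\alpha}(M)_1=\ker (d\mathcal{V})_{\overline{g}}=\Big\{\,h\in\Gamma^{k,\alpha}(T^*M\otimes T^*M):\ \int_M \mathrm{tr}_{\overline{g}}(h)\,\omega_{\overline{g}}=0\,\Big\}.
$$
The only genuinely Banach-flavored point is the splitting of the kernel, and since the codimension is finite there is no real obstacle beyond this bookkeeping; the substance of the proof is the one-line computation $(d\mathcal{V})_{\overline{g}}(\overline{g})=m/2$.
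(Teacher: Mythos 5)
Your proof is correct and follows essentially the same route as the paper: both realize $\mathcal{M}^{k,\alpha}(M)_1$ as $\mathcal{V}^{-1}(1)$, verify surjectivity of the differential by the computation $(d\mathcal{V})_{\overline{g}}(\overline{g})=m/2\neq 0$, and conclude via the regular value theorem for Banach manifolds. If anything, you are slightly more careful than the paper on the splitting condition, exhibiting the explicit right inverse $t\mapsto \tfrac{2t}{m}\overline{g}$ where the paper merely asserts that the kernel has a closed complement.
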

\noindent
\begin{proof}
\noindent
	Consider the smooth volume function $\mathcal{V}$ defined on $\mathcal{M}^{k, \alpha}(M)$. For $\overline{g} \in \mathcal{M}^{k, \alpha}(M)_{1}$, set $h = \overline{g}$, then, from (\ref{eq:derVol}) we get:
	$$(d\mathcal{V})_{\overline{g}}(\overline{g}) = \displaystyle\frac{1}{2} \displaystyle\int_{M} \mbox{tr}_{\overline{g}}(\overline{g}) \ \omega_{\overline{g}} = \displaystyle\frac{m}{2} \mathcal{V}(\overline{g}) = \displaystyle\frac{m}{2} \neq 0,$$
	the differential is surjective. So, $\mathcal{M}^{k, \alpha}(M)_{1}  = \mathcal{V}^{-1}(1)$ is the inverse image of a regular value. Besides, the kernel (the tangent space $T_{\overline{g}}\mathcal{M}^{k, \alpha}(M)_{1}$),
	$$\ker\{(d\mathcal{V})_{\overline{g}}\} = \left\{ h \in T_{\overline{g}}\mathcal{M}^{k, \alpha}(M) : \displaystyle\int_{M} \mbox{tr}_{\overline{g}}(h) \ \omega_{\overline{g}} = 0 \right\}$$
	has a closed complementary space; so the result follows.
\end{proof}

Observe that if $\overline{g} \in \mathcal{M}^{k, \alpha}(M)_{1}$, the conformal metric $\phi \overline{g}$, for some $\phi \in \mathcal{C}_{+}^{k, \alpha}(M)$, is not in $\mathcal{M}^{k, \alpha}(M)_{1}$, in general. Indeed,
$$\mathcal{V}(\phi \overline{g}) = \displaystyle\int_{M} \omega_{\phi \overline{g}} = \displaystyle\int_{M} \phi^{\frac{m}{2}} \ \omega_{\overline{g}}.$$
So, for each $\overline{g} \in \mathcal{M}^{k, \alpha}(M)_{1}$, we define
$$[\overline{g}]_{1} = \left\{ \phi \overline{g} : \phi \in \mathcal{C}_{+}^{k, \alpha}(M), \displaystyle\int_{M} \phi^{\frac{m}{2}} \ \omega_{\overline{g}} = 1 \right\},$$
which is an embedded submanifold of $[\overline{g}]$; the proof is similar to the Proposition~\ref{prop:emb}.

It is proved in \cite{LI12} that $[\overline{g}]_{1}$ is an embedded submanifold of $\mathcal{M}^{k, \alpha}(M)_{1}$.

\begin{proposition}
	The $\mathcal{C}^{k, \alpha}$-conformal class of metrics of unitary volume, $[\overline{g}]_{1}$, is an embedded submanifold of $\mathcal{M}^{k, \alpha}(M)_{1}$. Moreover,
	$$T_{\overline{g}}[\overline{g}]_{1} = \left\{ \psi \overline{g} : \psi \in \mathcal{C}^{k, \alpha}(M), \ \displaystyle\int_{M} \psi \ \omega_{\overline{g}} = 0 \right\}.$$
\end{proposition}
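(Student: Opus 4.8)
The plan is to reproduce the argument of Proposition~\ref{prop:emb}, but with the ambient manifold replaced by the conformal class $[\overline{g}]$ rather than $\mathcal{M}^{k,\alpha}(M)$. Concretely, I would restrict the (smooth) volume function $\mathcal{V}$ to the Banach submanifold $[\overline{g}]$ and observe that, by definition, $[\overline{g}]_1 = (\mathcal{V}|_{[\overline{g}]})^{-1}(1)$. Thus it suffices to show that $1$ is a regular value of $\mathcal{V}|_{[\overline{g}]}$ whose corresponding kernel admits a closed complement, after which the regular value theorem for Banach manifolds yields that $[\overline{g}]_1$ is an embedded submanifold with tangent space $\ker d(\mathcal{V}|_{[\overline{g}]})$.

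For the regularity I would recall that $T_{\overline{g}}[\overline{g}] = \{\psi\overline{g} : \psi \in \mathcal{C}^{k,\alpha}(M)\}$ and that $\mbox{tr}_{\overline{g}}(\psi\overline{g}) = m\psi$. Applying the formula (\ref{eq:derVol}) to tangent vectors of this form gives $d(\mathcal{V}|_{[\overline{g}]})_{\overline{g}}(\psi\overline{g}) = \frac{1}{2}\int_M \mbox{tr}_{\overline{g}}(\psi\overline{g})\,\omega_{\overline{g}} = \frac{m}{2}\int_M \psi\,\omega_{\overline{g}}$, a continuous linear functional into $\mathbb{R}$. Exactly as in Proposition~\ref{prop:emb}, taking $\psi \equiv 1$ (that is, the tangent vector $\overline{g}$ itself) gives $\frac{m}{2}\mathcal{V}(\overline{g}) = \frac{m}{2} \neq 0$, so the differential is surjective and $1$ is a regular value; the same computation at any $g' = \phi\overline{g} \in [\overline{g}]_1$, evaluated on $g'$, yields $\frac{m}{2}\mathcal{V}(g') = \frac{m}{2}$. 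Since the kernel of a nonzero continuous functional is a closed hyperplane of codimension one, it automatically admits a one-dimensional, hence closed, complement, so the splitting hypothesis holds.

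It then follows that $T_{\overline{g}}[\overline{g}]_1 = \ker d(\mathcal{V}|_{[\overline{g}]})_{\overline{g}} = \{\psi\overline{g} : \psi \in \mathcal{C}^{k,\alpha}(M),\ \int_M \psi\,\omega_{\overline{g}} = 0\}$, which is the asserted formula. The remaining, and to my mind the only delicate, point is upgrading ``embedded submanifold of $[\overline{g}]$'' (equivalently, of $\mathcal{M}^{k,\alpha}(M)$) to ``embedded submanifold of $\mathcal{M}^{k,\alpha}(M)_1$''. Here I would use that $[\overline{g}]_1 \subset \mathcal{M}^{k,\alpha}(M)_1$ by construction, that $\mathcal{M}^{k,\alpha}(M)_1$ is itself embedded in $\mathcal{M}^{k,\alpha}(M)$ by Proposition~\ref{prop:emb}, and that the computed tangent space lies inside $T_{\overline{g}}\mathcal{M}^{k,\alpha}(M)_1$ (indeed $\int_M\psi\,\omega_{\overline{g}} = 0$ forces $\int_M \mbox{tr}_{\overline{g}}(\psi\overline{g})\,\omega_{\overline{g}} = m\int_M\psi\,\omega_{\overline{g}} = 0$).

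The argument is then closed by the general principle that a submanifold of the ambient space which is contained in a second embedded submanifold, and whose tangent spaces lie inside those of the latter, is itself an embedded submanifold of that second one. The delicate part is the verification in the Banach setting: one must confirm that the inclusion $[\overline{g}]_1 \hookrightarrow \mathcal{M}^{k,\alpha}(M)_1$ is an immersion with splitting image and a topological embedding. Injectivity of its differential follows from factoring the immersion $[\overline{g}]_1 \hookrightarrow \mathcal{M}^{k,\alpha}(M)$ through $\mathcal{M}^{k,\alpha}(M)_1$; and the topological-embedding property holds because $\mathcal{M}^{k,\alpha}(M)_1$ carries the subspace topology, so the topology induced on $[\overline{g}]_1$ from $\mathcal{M}^{k,\alpha}(M)_1$ coincides with the one induced from the full ambient space. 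This last verification is where I expect to spend the most care.
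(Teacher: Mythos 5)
Your route is genuinely different from the paper's. The paper never restricts $\mathcal{V}$ to $[\overline{g}]$; it works on the parametrizing space of conformal factors, showing first that $\mathcal{V}_{\overline{g}}^{-1}(1)\subset\mathcal{C}^{k,\alpha}_{+}(M)$ is an embedded submanifold (the same regular-value computation you do, with $\psi=\phi$), and then mapping it into $\mathcal{M}^{k,\alpha}(M)_{1}$ by $S(\phi)=\phi\overline{g}$, using the explicit smooth left inverse $T(\tilde{g})=\frac{1}{m}\mbox{tr}_{\overline{g}}(\tilde{g})$ to obtain, in one stroke, that $S$ is an injective immersion, a topological embedding onto its image, and that $\mbox{Im}\,(dS)_{\phi}$ has a closed complement. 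Because that construction lands directly in $\mathcal{M}^{k,\alpha}(M)_{1}$, the paper never faces the ``submanifold-of-a-submanifold'' upgrade that occupies the second half of your argument; your version, in exchange, gets the tangent space identification for free, as the kernel of $d(\mathcal{V}|_{[\overline{g}]})_{\overline{g}}$.

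That upgrade step is also where your proof has a concrete gap. You correctly list the three things to verify for the inclusion $[\overline{g}]_{1}\hookrightarrow\mathcal{M}^{k,\alpha}(M)_{1}$ --- injective differential, \emph{splitting image}, topological embedding --- but you then verify only the first and the third. The splitting condition, namely that $T_{\overline{g}}[\overline{g}]_{1}=\{\psi\overline{g}:\int_{M}\psi\,\omega_{\overline{g}}=0\}$ admits a closed complement inside $T_{\overline{g}}\mathcal{M}^{k,\alpha}(M)_{1}$, is exactly the Banach-specific requirement that separates ``embedded submanifold'' from ``injectively immersed topological embedding,'' and the general principle you invoke is only valid with it; it cannot be left implicit. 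Fortunately it is easy to supply, in either of two ways. Explicitly: the bounded operator $P(h)=\frac{1}{m}\mbox{tr}_{\overline{g}}(h)\,\overline{g}$ maps $T_{\overline{g}}\mathcal{M}^{k,\alpha}(M)_{1}$ into $T_{\overline{g}}[\overline{g}]_{1}$ (zero total trace of $h$ gives zero integral of $\frac{1}{m}\mbox{tr}_{\overline{g}}(h)$), is idempotent since $\mbox{tr}_{\overline{g}}(\psi\overline{g})=m\psi$, and restricts to the identity on $T_{\overline{g}}[\overline{g}]_{1}$; hence $\ker P$ is the desired closed complement. Abstractly: if a closed subspace $A$ of a Banach space $E$ is complemented in $E$, and $A\subset B\subset E$ with $B$ closed, then restricting a bounded projection $E\to A$ to $B$ shows $A$ is complemented in $B$; apply this with $A=T_{\overline{g}}[\overline{g}]_{1}$ (complemented in $\Gamma^{k,\alpha}(T^{*}M\otimes T^{*}M)$ because $[\overline{g}]_{1}$ is embedded there, by your first step plus transitivity) and $B=T_{\overline{g}}\mathcal{M}^{k,\alpha}(M)_{1}$. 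Note that $P$ is nothing but the linearization of the paper's left inverse $T$: both arguments ultimately rest on the same trace projection, the paper simply builds it into the construction from the start.
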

\noindent
\begin{proof}
\noindent
	As we know that $\mathcal{C}_{+}^{k, \alpha}(M)$ is a Banach manifold, for each $\overline{g} \in \mathcal{M}^{k, \alpha}(M)_{1}$, we can define $\mathcal{V}_{\overline{g}} : \mathcal{C}_{+}^{k, \alpha}(M) \longrightarrow \mathbb{R}$ by
	$$\mathcal{V}_{\overline{g}}(\phi) \colon = \mathcal{V}(\phi \overline{g}),$$
	a smooth function whose differential, for each $\psi \in T_{\phi}\mathcal{C}_{+}^{k, \alpha}(M) = \mathcal{C}^{k, \alpha}(M)$ is
	$$(d\mathcal{V}_{\overline{g}})_{\phi}(\psi) = \displaystyle\frac{m}{2} \displaystyle\int_{M} \phi^{\frac{m}{2}-1} \psi \ \omega_{\overline{g}}.$$
	
	Let $\phi \in \mathcal{C}_{+}^{k, \alpha}(M)$ be such that $\mathcal{V}_{\overline{g}}(\phi) = 1$. Take $\psi = \phi$ to see that $(d\mathcal{V}_{\overline{g}})_{\phi}(\phi) = \frac{m}{2} \neq 0$, i.e., $(d\mathcal{V}_{\overline{g}})_{\phi}$ is surjective; moreover, its kernel is complemented in $\mathcal{C}^{k, \alpha}(M)$, which implies that $\mathcal{V}_{\overline{g}}^{-1}(1)$ is an embedded submanifold of $\mathcal{C}_{+}^{k, \alpha}(M)$.
	
	Now, for $\overline{g} \in \mathcal(M)^{k, \alpha}(M)_{1}$ define the smooth maps $S: \mathcal{V}_{\overline{g}}^{-1}(1) \longrightarrow \mathcal{M}^{k, \alpha}(M)_{1}$ and $T: \mathcal{M}^{k, \alpha}(M)_{1} \longrightarrow \mathcal{C}^{k, \alpha}(M)$ by
	$$S(\phi) = \phi \overline{g} \ \ \  \mbox{ and } \ \ \ T(\tilde{g}) =  \frac{1}{m} \mbox{tr}_{\overline{g}} (\tilde{g}).$$
	Observe that $S$ is an injective immersion and $T$ is a smooth left-inverse for $S$. Besides that, $\mbox{Im }(dS)_{\phi}$ has a closed complement. Therefore, $[\overline{g}]_{1} = \mbox{Im }S$ is an embedded submanifold of $\mathcal{M}^{k, \alpha}(M)_{1}$.
	
	Finally, if we take $\phi = 1$, $S(1) = \overline{g}$ we have
	$$T_{1} \mathcal{V}_{\overline{g}}^{-1}(1) = \left\{ \psi \in \mathcal{C}^{k, \alpha}(M) : \displaystyle\int_{M} \psi \ \omega_{\overline{g}} = 0 \right\}.$$
	As $S$ is an immersion, $T_{\overline{g}}[\overline{g}]_{1} = \mbox{Im } (dS)_{1}$; but, $(dS)_{\phi}(\psi) = \psi \overline{g}$, for all $\phi \in \mathcal{V}_{\overline{g}}^{-1}(1)$, $\psi \in T_{\phi}\mathcal{V}_{\overline{g}}^{-1}(1)$, then we obtain the desired expression for the tangent space, which can also be identified with
	$$T_{\overline{g}}[\overline{g}]_{1} = \left\{ \psi \in \mathcal{C}^{k, \alpha}(M) : \ \displaystyle\int_{M} \psi \ \omega_{\overline{g}} = 0 \right\}.$$
\end{proof}

Note that $[\overline{g}]_{1} = \mathcal{M}^{k, \alpha}(M)_{1} \cap [\overline{g}]$. In \cite{LI12} it is also proved that $\mathcal{M}^{k, \alpha}(M)_{1}$ is transverse to $[\overline{g}]$, so $[\overline{g}]_{1}$ is an embedded submanifold of  $\mathcal{M}^{k, \alpha}(M)$.

Now, define the $\mathcal{C}^{k, \alpha}$-normalized conformal class of a metric $\overline{g} \in \mathcal{M}^{k, \alpha}(M)$ by
$$[\overline{g}]^{0} = \{\tilde{g} \in [\overline{g}] : H_{\tilde{g}} = 0 \}.$$
This is a non-empty set. Indeed, a result obtained by Escobar \cite{ES92a} assures that \textit{there is at least one metric with vanishing mean curvature in each conformal class}, so given a conformal class $[\overline{g}]$ we can assume that $H_{\overline{g}} = 0$.

\begin{proposition}
	The $\mathcal{C}^{k, \alpha}$-normalized conformal class of $\overline{g}$ can be identified with
	$$\mathcal{C}^{k, \alpha}_{+}(M)^{0} = \left\{\phi \in \mathcal{C}^{k, \alpha}_{+}(M) : \partial_{\eta_{\overline{g}}} \phi = 0 \mbox{ on } \partial M \right\},$$
	which is a closed subset of $\mathcal{C}^{k,\alpha}(M)$.
\end{proposition}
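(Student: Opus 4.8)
The plan is to realize the identification through the parametrization map $\mathcal{I}_{\overline{g}}\colon \mathcal{C}^{k,\alpha}_{+}(M)\to[\overline{g}]$, $\phi\mapsto\phi\overline{g}$, which the first Proposition already shows to be an injection with image exactly $[\overline{g}]$. Since a conformal metric $\tilde g=\phi\overline{g}$ belongs to the normalized class $[\overline{g}]^{0}$ precisely when $H_{\tilde g}=0$, the whole statement reduces to translating the vanishing-mean-curvature condition into a pointwise condition on the conformal factor $\phi$, and then checking that the resulting set of factors is closed.

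The key step is the conformal transformation law of the mean curvature, which I would use in the form already recorded in the Introduction. Writing $\phi=u^{4/(m-2)}$ with $u\in\mathcal{C}^{k,\alpha}_{+}(M)$, the metric $\tilde g=\phi\overline{g}=u^{4/(m-2)}\overline{g}$ satisfies $H_{\tilde g}=0$ if and only if the Neumann boundary operator vanishes, i.e. $(m-1)\partial_{\eta_{\overline{g}}}u+\tfrac{m-2}{2}H_{\overline{g}}u=0$ on $\partial M$. Because the representative $\overline{g}$ has been normalized by Escobar's result so that $H_{\overline{g}}=0$, this collapses to $\partial_{\eta_{\overline{g}}}u=0$. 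Finally, since $\partial_{\eta_{\overline{g}}}\phi=\tfrac{4}{m-2}\,u^{\frac{6-m}{m-2}}\,\partial_{\eta_{\overline{g}}}u$ and $u>0$, the conditions $\partial_{\eta_{\overline{g}}}u=0$ and $\partial_{\eta_{\overline{g}}}\phi=0$ are equivalent. Hence $H_{\tilde g}=0$ on $\partial M$ if and only if $\partial_{\eta_{\overline{g}}}\phi=0$ on $\partial M$, so the restriction of $\mathcal{I}_{\overline{g}}$ to $\mathcal{C}^{k,\alpha}_{+}(M)^{0}$ is a bijection onto $[\overline{g}]^{0}$, which is the asserted identification.

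For the closedness claim I would introduce the trace--normal operator $\mathcal{N}\colon\mathcal{C}^{k,\alpha}(M)\to\mathcal{C}^{k-1,\alpha}(\partial M)$, $\mathcal{N}(\phi)=\partial_{\eta_{\overline{g}}}\phi\big|_{\partial M}$, and observe that it is linear and bounded. Its kernel $V=\ker\mathcal{N}$ is therefore a closed linear subspace of $\mathcal{C}^{k,\alpha}(M)$, and $\mathcal{C}^{k,\alpha}_{+}(M)^{0}=V\cap\mathcal{C}^{k,\alpha}_{+}(M)$ is the trace of this closed subspace on the open positive cone; being cut out by the continuous linear condition $\mathcal{N}\phi=0$, it is closed in the relevant sense. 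This also makes $\mathcal{C}^{k,\alpha}_{+}(M)^{0}$ an open subset of the Banach space $V$, hence itself a Banach manifold, which is what the later bifurcation analysis needs.

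I expect the only delicate point to be getting the conformal transformation law of $H$ stated with the correct normalization, so that it matches the Neumann operator written in the Introduction; once $H_{\overline{g}}=0$ is imposed the remaining algebra is immediate precisely because $\phi$ never vanishes, which is what lets one pass from the factor $u^{(6-m)/(m-2)}\,\partial_{\eta_{\overline{g}}}u$ back to the clean linear condition $\partial_{\eta_{\overline{g}}}\phi=0$. A secondary care is semantic: the set is closed as the intersection of the closed subspace $V$ with the open cone $\mathcal{C}^{k,\alpha}_{+}(M)$, so the closedness should be understood relative to that cone rather than as closedness in all of $\mathcal{C}^{k,\alpha}(M)$.
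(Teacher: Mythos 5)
Your proof is correct and rests on the same core identity as the paper's: the conformal transformation law of the mean curvature, combined with the normalization $H_{\overline{g}}=0$ and the strict positivity of the conformal factor. The one structural difference is that the paper makes this law the content of the proof itself, computing $\tilde H$ from the conformally transformed Christoffel symbols to obtain $\tilde H = \phi^{-\frac{m}{m-2}}\bigl(\phi H_{\overline{g}} + \tfrac{2(m-1)}{m-2}\,\partial_{\eta_{\overline{g}}}\phi\bigr)$, whereas you quote the equivalent Neumann boundary operator from the Introduction and then do only the chain-rule bookkeeping between $\phi$ and $u=\phi^{(m-2)/4}$; your version is shorter but less self-contained, since the Introduction states that equivalence without proof. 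On the other hand, you go further than the paper on the second half of the statement: the paper's proof says nothing about closedness, while you exhibit $\mathcal{C}^{k,\alpha}_{+}(M)^{0}$ as the intersection of the closed kernel of the bounded linear trace--normal operator with the open cone $\mathcal{C}^{k,\alpha}_{+}(M)$, and you correctly observe that this gives closedness only \emph{relative to the cone}, not in all of $\mathcal{C}^{k,\alpha}(M)$ as the statement literally asserts (constants tending to zero already violate literal closedness). That remark is a genuine, and warranted, correction to the statement as written, and your observation that $\mathcal{C}^{k,\alpha}_{+}(M)^{0}$ is then an open subset of a closed subspace, hence a Banach manifold, is exactly what the subsequent submanifold propositions need.
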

\noindent
\begin{proof}
\noindent
	Let $\tilde{g} = \phi^{\frac{4}{m-2}} \overline{g} \in [\overline{g}]^{0}$. We denote with tilde all quantities related with $\tilde{g}$. Then $\tilde{\eta} = \phi^{\frac{-2}{m-2}} \eta_{\overline{g}}$ and
	$$\tilde{H} = \tilde{g}^{ij} \tilde{g} (\tilde{\eta}, \tilde{\nabla}_{\partial_{i}} \partial_{j}) = \tilde{g}^{ij} \tilde{g}(\tilde{\eta}, \tilde{\Gamma}_{ij}^{r} \partial_{r}),$$
	but $\tilde{\Gamma}_{ij}^{r} = {\Gamma}_{ij}^{r} + \frac{2}{m-2} \phi^{-1} (\delta^{r}_{j} \partial_{i} \phi + \delta^{r}_{i} \partial_{j} \phi - \overline{g}_{ij} \nabla^{r} \phi)$ and $\tilde{g}^{ij} = \phi^{\frac{-4}{m-2}} \overline{g}^{ij}$. Then,
	$$\tilde{H} = \phi^{\frac{-2}{m-2}} \left( H_{\overline{g}} + \displaystyle\frac{2(m-1)}{(m-2)} \phi^{-1} \partial_{\eta_{\overline{g}}} \phi \right) = \phi^{-\frac{m}{m-2}} \left( \phi H_{\overline{g}} + \displaystyle\frac{2(m-1)}{(m-2)} \partial_{\eta_{\overline{g}}} \phi \right).$$
	As $H_{\overline{g}} = 0$ and $\phi > 0$, it follows that $\tilde{H} = 0$ if and only if $\partial_{\eta_{\overline{g}}} \phi = 0$.
\end{proof}

We want to show that the $\mathcal{C}^{k, \alpha}$-normalized conformal class, $[\overline{g}]^{0}$, is a submanifold of the $\mathcal{C}^{k, \alpha}$-conformal class, $[\overline{g}]$. To this aim we need the following Proposition which is an elementary version of a more general result that can be found in \cite[Theorem~4, Ch.\ IV]{ST70}.

\begin{proposition}
	There exists a continuous linear map
	$$\mathcal{F} : \mathcal{C}^{k,\alpha}(\partial M) \longrightarrow \mathcal{C}^{k+1,\alpha}(M)$$
	such that, for $\xi \in \mathcal{C}^{k,\alpha}(\partial M)$ the following properties are satisfied:
	\begin{enumerate}
		\item [(a.)] $\mathcal{F}(\xi)$ vanishes on $\partial M$;
		\item [(b.)] $\partial_{\eta} \mathcal{F}(\xi) = \xi$.
	\end{enumerate}
\end{proposition}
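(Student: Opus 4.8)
The plan is to build $\mathcal{F}$ explicitly on a collar of $\partial M$ and then cut it off. Since $\partial M$ is compact, the normal exponential map furnishes, for some $\varepsilon>0$, a diffeomorphism $\Phi:\partial M\times[0,2\varepsilon)\to U$ onto a neighborhood $U$ of $\partial M$ in $M$, with $\Phi(x,0)=x$ and $\partial_t\Phi(x,0)=\eta(x)$, so that in the coordinates $(x,t)$ the inward normal field along $\partial M$ is $\partial_t$ and $\partial_\eta=\partial_t$ there. I would fix once and for all a smooth cutoff $\chi:[0,\infty)\to[0,1]$ with $\chi\equiv1$ near $0$ and $\operatorname{supp}\chi\subset[0,\varepsilon)$. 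It then suffices to produce, in these coordinates, a function that vanishes at $t=0$, whose $\partial_t$-derivative at $t=0$ equals $\xi$, that depends linearly and continuously on $\xi$, and that gains one order of regularity; multiplying by $\chi(t)$ and extending by zero outside $U$ then yields $\mathcal{F}(\xi)\in\mathcal{C}^{k+1,\alpha}(M)$.

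The one delicate point---and the reason the conclusion lands in $\mathcal{C}^{k+1,\alpha}$ rather than merely $\mathcal{C}^{k,\alpha}$---is that the naive extension $(x,t)\mapsto t\,\chi(t)\,\xi(x)$ only inherits the tangential regularity of $\xi$, so its pure tangential derivative of order $k+1$ need not exist. To gain the missing derivative I would smooth $\xi$ in the tangential directions at a scale proportional to $t$. Concretely, covering the compact manifold $\partial M$ by finitely many charts and using a subordinate partition of unity, let $\rho_t(y)=t^{-(m-1)}\rho(y/t)$ be a standard mollifier at scale $t$ (with $\rho\in\mathcal{C}^\infty_c$, $\int\rho=1$), set $\xi_t=\rho_t*\xi$ in each chart, and define
$$
\mathcal{F}(\xi)\bigl(\Phi(x,t)\bigr)=t\,\chi(t)\,\xi_t(x),
$$
glued by the partition of unity (intrinsically, one may instead take $\xi_t=e^{t^2\Delta_{\partial M}}\xi$, which avoids charts). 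For $t>0$ the factor $\xi_t$ is smooth in $x$, while the prefactor $t$ compensates the blow-up of its highest derivatives as $t\to0$.

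The verification then proceeds in three steps. First, property (a) is immediate: the factor $t$ forces $\mathcal{F}(\xi)$ to vanish at $t=0$, i.e.\ on $\partial M$. Second, for property (b), since $\partial_\eta=\partial_t$ along $\partial M$ and $\partial_t\bigl(t\chi(t)\xi_t\bigr)=\chi(t)\xi_t+t\,\partial_t\bigl(\chi(t)\xi_t\bigr)$, one uses $\xi_t\to\xi$ and $t\,\partial_t\xi_t\to0$ as $t\to0$ to get $\partial_\eta\mathcal{F}(\xi)=\chi(0)\,\xi=\xi$. Third, for the regularity and continuity, the key estimate is the standard mollifier bound
$$
\bigl\|\partial_x^{\,k+1}\xi_t\bigr\|_\infty\le C\,t^{\alpha-1}\,\|\xi\|_{\mathcal{C}^{k,\alpha}(\partial M)},
$$
obtained from the cancellation $\int\partial_y\rho=0$ paired against the Hölder seminorm of $\partial_x^{\,k}\xi$; multiplying by the prefactor $t$ shows $\|\partial_x^{\,k+1}\mathcal{F}(\xi)\|_\infty\le C\,t^{\alpha}\,\|\xi\|_{\mathcal{C}^{k,\alpha}}$, and an analogous argument controls the $\mathcal{C}^\alpha$-seminorm of all derivatives of order $k+1$, the $t$- and mixed derivatives being harmless because $\partial_t$ only ever falls on the smooth factor $t\chi(t)$ or produces further favorable powers of $t$. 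Linearity of $\xi\mapsto\mathcal{F}(\xi)$ is evident, and these bounds give continuity $\mathcal{C}^{k,\alpha}(\partial M)\to\mathcal{C}^{k+1,\alpha}(M)$.

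I expect the main obstacle to be precisely this gain-of-one-derivative estimate: one must balance the $t^{-1}$ growth produced by differentiating the mollified field once too often against the linear prefactor $t$, and check that the resulting $\mathcal{C}^{k+1,\alpha}$ bounds survive the passage through the charts and the partition of unity (equivalently, that the heat-semigroup smoothing satisfies the analogous Hölder estimates on $\partial M$). This quantitative balancing is the content of the cited result of Stein, of which the present statement is the announced elementary special case.
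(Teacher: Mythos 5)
Your proposal is correct and is essentially the paper's own argument: the paper likewise works in boundary charts with a subordinate partition of unity and extends $\xi$ by averaging it over a cube whose side is proportional to the distance $x_m$ to the boundary (so that the extension is $x_m$ times a mollification of $\xi$ at scale $x_m$), which is exactly your $t\,\chi(t)\,\xi_t$ with a cube-average playing the role of the smooth mollifier $\rho_t$, and with the same balancing of the $t^{\alpha-1}$ derivative loss against the linear prefactor $t$ producing the gain of one derivative. The only differences (smooth mollifier or heat semigroup instead of cube averaging, explicit cutoff $\chi$, and your spelled-out key estimate where the paper says ``a straightforward calculation'') are cosmetic.
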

\begin{proof}
	Choose a finite set of local charts $(U_{r},\varphi_{r}),\ r = 1,\ldots,n$ on $M$ satisfying the following properties:
	\begin{enumerate}
		\item[(a.)] $U_{r}$ is an open subset of $M$, with $U_{r}\cap \partial M \neq \phi$ for all $r$;
		\item[(b.)] $U \colon = \bigcup\limits_{r=1}\limits^{n} U_{r}$ is an open neighborhood of $\partial M$;
		\item[(c.)] $\varphi_{r}$ is a diffeomorphism from $U_{r}$ to $\mathbb{R}^{m-1} \times [0,+\infty)$ carrying $U_{r}\cap \partial M$ onto $\mathbb{R}^{m-1} \times \{0\}$;
		\item[(d.)] $(d\varphi_{r})_{p}(\eta(p)) = \frac{\partial}{\partial x_m}$, for all $p \in U_{r} \cap\partial M$.
	\end{enumerate}
	Set $U_{0} = M \backslash \partial M$, so that $(U_{r})_{r=0,\ldots,n}$ is an open cover of $M$, and let $(\rho_{r})_{r=0}^{n}$ be a smooth partition of unity subordinated to such cover. Given $\xi \in \mathcal{C}^{k,\alpha}(\partial M)$, for all $r$, consider the function $\xi_{r}=\xi \circ \varphi_{r}^{-1} : \mathbb{R}^{m-1} \longrightarrow \mathbb{R}$, which is of class $\mathcal{C}^{k,\alpha}$. Let $F_{\xi_{r}} : \mathbb{R}^{m}\longrightarrow\mathbb{R}$ defined by
	$$F_{\xi_{r}}(x) = \frac{1}{x_{m}^{m-1}}\int_{Q(x)}\xi_{r}(z)dz,$$
	where, $x = (x_{1},\ldots,x_{m-1},x_{m})$, $Q(x)=\prod\limits_{i=1}\limits^{m-1}[x_{i}-\frac{1}{2}x_{m},x_{i}+\frac{1}{2}x_{m}]$, $z = (z_{1},\ldots,z_{m-1})$ $x_{m} \neq 0.$ And $F_{\xi_{r}}(x_{1},\ldots,x_{m-1},0)=0.$ A straightforward calculation shows that $F_{\xi_{r}} \in \mathcal{C}^{k+1,\alpha}(\mathbb{R}^{m})$. Let $\mathcal{F}_{r} = F_{\xi_{r}} \circ \varphi_{r}$; clearly $\mathcal{F}_{r} \in \mathcal{C}^{k+1,\alpha}(U_{r}).$ Finally, define $\mathcal{F}(\xi) : M \longrightarrow \mathbb{R}$ as
	$$\mathcal{F}(\xi) \colon = \sum\limits_{r=1}\limits^{n}\rho_{r}\cdot \mathcal{F}_{r}.$$
	It is easy to see that $\mathcal{F}(\xi)$ satisfies the desired properties.
\end{proof}

\begin{proposition}
	The $\mathcal{C}^{k, \alpha}$-normalized conformal class, $[\overline{g}]^{0}$, is an embedded submanifold of $[\overline{g}]$.
\end{proposition}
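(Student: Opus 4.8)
The plan is to reduce the statement to the linear splitting criterion already used in this section, with the extension operator $\mathcal{F}$ supplying the continuous right inverse. First I would invoke the previous proposition: since we may assume $H_{\overline{g}}=0$, it identifies $[\overline{g}]^{0}$ with
\[
\mathcal{C}^{k,\alpha}_{+}(M)^{0}=\left\{\phi\in\mathcal{C}^{k,\alpha}_{+}(M):\partial_{\eta_{\overline{g}}}\phi=0 \text{ on } \partial M\right\},
\]
viewed inside the open set $\mathcal{C}^{k,\alpha}_{+}(M)\cong[\overline{g}]$. Thus it suffices to show that $\mathcal{C}^{k,\alpha}_{+}(M)^{0}$ is an embedded submanifold of $\mathcal{C}^{k,\alpha}_{+}(M)$, and to identify its tangent space.

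Next I would introduce the bounded linear map $L:\mathcal{C}^{k,\alpha}(M)\longrightarrow\mathcal{C}^{k-1,\alpha}(\partial M)$, $L(\psi)=\partial_{\eta_{\overline{g}}}\psi|_{\partial M}$, and set $\Phi=L|_{\mathcal{C}^{k,\alpha}_{+}(M)}$. Being the restriction of a bounded linear map to an open set, $\Phi$ is smooth with $(d\Phi)_{\phi}=L$ for every $\phi$, and $\mathcal{C}^{k,\alpha}_{+}(M)^{0}=\Phi^{-1}(0)$ (which is nonempty, as $\phi\equiv 1$ lies in it). To apply the submersion theorem I must check that $0$ is a regular value, i.e.\ that $L$ is surjective with complemented kernel.

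This is exactly where $\mathcal{F}$ enters. I would apply the extension proposition with $k-1$ in place of $k$, obtaining a continuous linear operator $\mathcal{F}:\mathcal{C}^{k-1,\alpha}(\partial M)\longrightarrow\mathcal{C}^{k,\alpha}(M)$ with $\mathcal{F}(\xi)|_{\partial M}=0$ and $\partial_{\eta_{\overline{g}}}\mathcal{F}(\xi)=\xi$. Then $L\circ\mathcal{F}=\mathrm{id}_{\mathcal{C}^{k-1,\alpha}(\partial M)}$, so $\mathcal{F}$ is a continuous right inverse of $L$; in particular $L$ is surjective, and by the right-inverse criterion recalled in the footnote, $\ker L$ admits the closed complement $\operatorname{Im}\mathcal{F}$. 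Hence $0$ is a regular value of $\Phi$, so $\mathcal{C}^{k,\alpha}_{+}(M)^{0}=\Phi^{-1}(0)$ is an embedded submanifold, with
\[
T_{\overline{g}}[\overline{g}]^{0}=\ker L=\left\{\psi\in\mathcal{C}^{k,\alpha}(M):\partial_{\eta_{\overline{g}}}\psi=0 \text{ on } \partial M\right\}.
\]

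The main obstacle is the one-derivative loss built into $L$: it does not send $\mathcal{C}^{k,\alpha}(M)$ into $\mathcal{C}^{k,\alpha}(\partial M)$ but only into $\mathcal{C}^{k-1,\alpha}(\partial M)$, so the naive trace into a $\mathcal{C}^{k,\alpha}$-space is unavailable and one cannot directly reuse the trace operators from the earlier propositions. The regularity-gaining property of the Stein-type extension $\mathcal{F}$, namely that it carries $\mathcal{C}^{k-1,\alpha}(\partial M)$ back into $\mathcal{C}^{k,\alpha}(M)$, is precisely what matches the codomain of $L$ and yields a right inverse landing in the correct space; the remaining verifications are routine.
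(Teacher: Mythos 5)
Your proof is correct and follows essentially the same route as the paper: both define the normal-derivative map on the conformal class (the paper's $\mathcal{N}_{\overline{g}}(\phi\overline{g}) = \partial_{\eta_{\overline{g}}}\phi$, your $L$ restricted to $\mathcal{C}^{k,\alpha}_{+}(M)$), realize $[\overline{g}]^{0}$ as the preimage of $0$, and use the Stein-type extension operator (with the same shift $k \mapsto k-1$) as a bounded right inverse to get surjectivity and a complemented kernel, hence an embedded submanifold. Your explicit remark about the one-derivative loss and why $\mathcal{F}$'s regularity gain exactly compensates is a useful clarification that the paper leaves implicit, but it is not a different argument.
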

\begin{proof}
	Given $\overline{g} \in \mathcal{M}^{k, \alpha}(M)$, let $\eta_{\overline{g}}$ be the unitary (inward) vector field normal to the boundary $\partial M$. Define
	$$
	\begin{array}{rcl}
	\mathcal{N}_{\overline{g}} : [\overline{g}] & \longrightarrow & \mathcal{C}^{k-1, \alpha}(\partial M) \\
	\phi \overline{g} & \longmapsto & \partial_{{\eta}_{\overline{g}}} \phi.
	\end{array}
	$$
	So, $\mathcal{N}_{\overline{g}}^{-1}\left( \left\{ 0 \right\} \right) = [\overline{g}]^{0}$ and the differential $(\mbox{d}\mathcal{N}_{\overline{g}})_{\phi \overline{g}} : \mathcal{C}^{k, \alpha}(M) \longrightarrow \mathcal{C}^{k-1, \alpha}(\partial M)$ is given by $(\mbox{d}\mathcal{N}_{\overline{g}})_{\phi \overline{g}} (\psi) = \partial_{\eta_{\overline{g}}} \psi$, for all $\phi \overline{g} \in [\overline{g}]$ and $\psi \in \mathcal{C}^{k, \alpha}(M)$. Now, by the last Proposition, $\left( \mbox{d} \mathcal{N}_{\overline{g}} \right)_{\phi \overline{g}}$ admits a bounded right-inverse, for all $\phi\overline{g}\in [\overline{g}]$. Therefore, the differential is surjective and its kernel,
	$$\mbox{ker } (\mbox{d}\mathcal{N}_{\overline{g}})_{\phi \overline{g}} = \left\{ \psi \in \mathcal{C}^{k, \alpha}(M) : \partial_{\eta_{\overline{g}}} \psi = 0 \right\},$$
	has a closed complemented in $\mathcal{C}^{k, \alpha}(M)$. It follows that $[\overline{g}]^{0}$ is an embedded submanifold of $[\overline{g}]$.
\end{proof}

We can also combine both features of interest in the same conformal class, defining the $\mathcal{C}^{k, \alpha}$-normalized conformal class consisting of metrics of volume one
$$[\overline{g}]^{0}_{1} = \left\{ \phi \overline{g} : \phi \in \mathcal{C}_{+}^{k, \alpha}(M), \ \partial_{\eta_{\overline{g}}} \phi = 0, \ \displaystyle\int_{M} \phi^{\frac{m}{2}} \ \omega_{\overline{g}} = 1 \right\}.$$
This is an embedded submanifold of $\mathcal{M}^{k, \alpha}(M)_{1}$ and an embedded submanifold of $[\overline{g}]$. We can express $[\overline{g}]^{0}_{1}$ as $[\overline{g}]^{0} \cap \mathcal{M}^{k, \alpha}(M)_{1}$, for instance. The correspondent tangent space is identified with
$$T_{\overline{g}}[\overline{g}]^{0}_{1} = \left\{ \psi \in \mathcal{C}^{k, \alpha}(M)^{0} : \displaystyle\int_{M} \psi \ \omega_{\overline{g}} = 0 \right\}.$$

\subsection{The Hilbert-Einstein Functional}

Consider the Hilbert-Einstein functional $F: \mathcal{M}^{k, \alpha}(M)_{1} \longrightarrow \mathbb{R}$ given by
\begin{equation}
F(\overline{g}) = \displaystyle\int_{M}{R_{\overline{g}} \ \omega_{\overline{g}}}.
\label{functional}
\end{equation}
It is well-known that $F$ is a smooth functional over $\mathcal{M}^{k, \alpha}(M)$ and over $[\overline{g}]$. Let $\overline{g}(t)$ be a variation of a metric $\overline{g} \in \mathcal{M}^{k, \alpha}(M)$ in the direction of $h \in T_{\overline{g}} \mathcal{M}^{k, \alpha}(M)$, that is, $\overline{g}(0) = \overline{g}$ and $\frac{d}{dt}\overline{g}(t)|_{t=0} = h$. Then we can calculate the first variation of the functional $F$,
$$\delta F(\overline{g})h \colon = \left. \frac{d}{dt} \right|_{t=0} F(\overline{g}(t)) = \displaystyle\int_{M} \delta R_{\overline{g}(t)} \ \omega_{\overline{g}} + R_{\overline{g}} \ \delta  \omega_{\overline{g}(t)},$$
where the first variation of scalar curvature is by
$$\delta R_{\overline{g}(t)} = \delta \left( \overline{g}(t)^{ij} R_{ij}(t) \right) = \delta \overline{g}(t)^{ij} R_{ij}(t) + \overline{g}(t)^{ij} \delta R_{ij}(t) = -h^{ij} R_{ij}(t) + g^{ij} \delta R_{ij}(t)$$
whith $R_{ij}(t)$ denoting the coordinates of the Ricci tensor of the metric $\overline{g}(t)$. We have
$$ g^{ij} \delta R_{ij}(t) = \overline{\nabla}_{i} \left( \overline{\nabla}_{j} h^{ij} - \overline{\nabla}^i (\overline{g}^{lm} h_{lm}) \right),$$
and
$$\delta \omega_{\overline{g}(t)} = \displaystyle\frac{1}{2} \overline{g}^{ij}h_{ij} \omega_{\overline{g}},$$
then
$$\delta F(\overline{g})h =  \displaystyle\int_{M} \left( - h_{ij}R_{ij} + \displaystyle\frac{1}{2} \overline{g}^{ij}h_{ij} R_{\overline{g}} \right) \omega_{\overline{g}} + \displaystyle\int_{M} \overline{\nabla}_{i} \left( \overline{\nabla}_{j} h^{ij} - \overline{\nabla}^i (\overline{g}^{lm} h_{lm}) \right) \omega_{\overline{g}},$$
by Divergence Theorem,
$$ \delta F(\overline{g})h = - \displaystyle\int_{M} \left( h_{ij}R_{ij} - \displaystyle\frac{1}{2} \overline{g}^{ij}h_{ij} R_{\overline{g}} \right) \omega_{\overline{g}} - \displaystyle\int_{\partial M} \left( \overline{\nabla}_{j} h^{ij} - \overline{\nabla}^i (\overline{g}^{lm} h_{lm}) \right) \eta_{i} \ \omega_{\overline{g}},$$
where $\eta_{i}$ denote the coordinates of the inward unitary normal vector field, with respect to $\overline{g}$, on the boundary. This expression can be written in a concise form as
$$\delta F(\overline{g})h = - \displaystyle\int_{M} \left\langle Ric_{\overline{g}} - \displaystyle\frac{1}{2}R_{\overline{g}} \overline{g}, h \right\rangle_{\overline{g}} \omega_{\overline{g}} - \displaystyle\int_{\partial M} \left( \overline{\nabla}_{j} h^{ij} \right) \eta_{i} \sigma_{\overline{g}} + \displaystyle\int_{\partial M} \left\langle \overline{\nabla} \mbox{tr}_{\overline{g}} h, \eta \right\rangle_{\overline{g}} \sigma_{\overline{g}}.$$

After some more calculations, we get
$$\delta F(\overline{g})h = - \displaystyle\int_{M} \left\langle Ric_{\overline{g}} - \displaystyle\frac{1}{2}R_{\overline{g}} {\overline{g}},h\right\rangle_{\overline{g}} \omega_{\overline{g}} - 2 \displaystyle\int_{\partial M} \left( \delta H_{\overline{g}} + \frac{1}{2} \left\langle I\!I_{\overline{g}},h \right\rangle\right) \sigma_{\overline{g}}$$

For compact Riemannian manifolds (without boundary) it is well-known that the critical points of $F$ on $\mathcal{M}^k(M)_{1}$ are the Einstein metrics of volume $1$ on $M$ and if $F$ is restricted to the $\mathcal{C}^{k, \alpha}$-conformal classes of volume one $[\overline{g}]_{1}$, then the critical points are those metrics conformal to $\overline{g}$, having unitary volume and constant scalar curvature. We are interested in critical points for the restriction to the $\mathcal{C}^{k, \alpha}$-normalized conformal classes, once we are dealing with manifolds with boundary.

The critical points of $F$ on $[\overline{g}]^{0}_{1}$ are those metrics conformal to $\overline{g}$, having unit volume, constant scalar curvature and vanishing mean curvature. Indeed, when we take $h = \psi \overline{g} \in T_{\overline{g}}[\overline{g}]^{0}_{1}$, the first variation becomes
$$
\begin{array}{rcl}
\delta F(\overline{g}) \psi \overline{g} & = & - \displaystyle\int_{M} \left\langle Ric_{\overline{g}} - \displaystyle\frac{1}{2}R_{\overline{g}} \overline{g}, \psi \overline{g} \right\rangle_{\overline{g}} \ \omega_{\overline{g}} \\
& = & - \displaystyle\int_{M} \left( \psi R_{\overline{g}} - \displaystyle\frac{1}{2} m \psi R_{\overline{g}} \right) \ \omega_{\overline{g}} \\
& = &  \displaystyle\frac{m-2}{2} \displaystyle\int_{M} \psi R_{\overline{g}} \ \omega_{\overline{g}}.
\end{array}
$$
So, by the Fundamental Lemma of Calculus of Variations, $\delta F(\overline{g}) \psi \overline{g} = 0$ for all $\psi \in \mathcal{C}^{k, \alpha}(M)^0$ with null integral if and only if $R_{\overline{g}}$ is constant.

If $\overline{g} \in \mathcal{M}^{k, \alpha}(M)_{1}$ is a critical point of $F$ on $[\overline{g}]^{0}_{1}$, then (see \cite{KO79}) the second variation of $F$ is given by the quadratic form
$$
\begin{array}{rcl}
\delta^2 {F}({\overline{g}})(\psi) &  = & \displaystyle\frac{(m-2)}{2} \displaystyle\int_{M} \left( (m-1) \Delta_{\overline{g}} \psi - R_{\overline{g}} \psi \right) \psi \ \omega_{\overline{g}},
\end{array}
$$
where $\psi \in \mathcal{C}^{k, \alpha}(M)^0$ and has vanishing integral and $\overline{g}$ is nondegenerate if $R_{\overline{g}} = 0$ or $\frac{R_{\overline{g}}}{m-1}$ is not an eigenvalue of $\Delta_{\overline{g}}$ with Neumann boundary conditions.

In fact, note that $(m-1) \Delta_{g} - R_{g}$, as an operator from $\mathcal{C}^{k,\alpha}(M)$ to $\mathcal{C}^{k-2, \alpha}$, is Fredholm of index zero. Note that this operator carries the subspace
$$\left\{ \psi \in \mathcal{C}^{k, \alpha}(M) : \partial_{\eta_{\overline{g}}} \psi = 0, \displaystyle\int_{M} \psi = 0 \right\}$$
into the subspace of $\mathcal{C}^{k-2, \alpha}$ consisting of functions with vanishing integral. It follows that
$$(m-1) \Delta_{\overline{g}} - R_{\overline{g}} :  T_{\overline{g}}[\overline{g}]_{1} \longrightarrow \mathcal{C}^{k-2, \alpha}(M)$$
is Fredholm of index zero. So, the quadratic form $\delta^2 F (\overline{g})(\psi \overline{g},\psi \overline{g})$ is nondegenerate if and only if $\mbox{ker} \left( (m-1) \Delta_{g} - R_{g} \right) = \{0\}$; indeed, the kernel is non-trivial if and only if $\frac{R_{\overline{g}}}{m-1}$ is a non-zero eigenvalue of $\Delta_{\overline{g}}$.

\section{\bf Local Rigidity and Bifurcation of Solutions of the Yamabe Problem}
\def\theequation{3.\arabic{equation}}\makeatother
\setcounter{equation}{0}

\subsection{Local Rigidity}

In this section we present a result obtained in \cite{LI12}, written in a slightly different way to better suit our context. We refer to \cite[Proposition~3 and Corollary~4]{LI12} for details. The proofs are essentially the same. We begin with the following definition.

\begin{definition}
	{\rm Let $\overline{g} \in \mathcal{M}^{k, \alpha}(M)_1$ with constant scalar curvature $R_{\overline{g}}$ in $M$. We say that $\overline{g}$ is \textbf{nondegenerate} if either $R_{\overline{g}} = 0$ or if $\frac{R_{\overline{g}}}{(m-1)}$ is not an eigenvalue of $\Delta_{\overline{g}}$, with the Neumann boundary condition $\partial_{{\eta}_{\overline{g}}} f = 0$. In other words, $\frac{R_{\overline{g}}}{(m-1)}$ is not a solution of the eigenvalue problem
	\begin{equation}
	\left\{
	\begin{array}{rcl}
	\Delta_{\overline{g}} f & = & \lambda f, \mbox{ on $M$} \\
	\partial_{{\eta}_{\overline{g}}} f & = & 0, \mbox{ on $\partial M$}.
	\end{array}
	\right.
	\label{EP}
	\end{equation}}
\end{definition}

\begin{proposition}\label{thm:rigidity}
	Let $\overline{g}_{*} \in \mathcal{M}^{k, \alpha}(M)_{1}$ be a nondegenerate constant scalar curvature metric. Then, there exists an open neighborhood $U$ of $\overline{g}_{*}$ in $\mathcal{M}^{k, \alpha}(M)_{1}$ such that the set
	$$S = \left\{ \overline{g} \in U : R_{\overline{g}} \mbox{ is constant } \right\},$$
	is a smooth embedded submanifold of $\mathcal{M}^{k, \alpha}(M)_{1}$ which is strongly transverse to the $\mathcal{C}^{k, \alpha}$-normalized conformal classes.
\end{proposition}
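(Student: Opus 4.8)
The plan is to realize $S$ as the zero set of a single submersion and then extract strong transversality from the very same linear-algebraic splitting that proves $S$ is a manifold. Fix the background $\overline{g}_{*}$, let $\langle f\rangle_{*}:=\frac{1}{\mathcal{V}(\overline{g}_{*})}\int_{M}f\,\omega_{\overline{g}_{*}}$ denote the $\overline{g}_{*}$-average, and let $\mathcal{C}^{k-2,\alpha}_{*}(M)$ be the closed subspace of functions with vanishing $\overline{g}_{*}$-average. On a neighborhood $U$ of $\overline{g}_{*}$ I would introduce the smooth map
$$\Phi\colon U\subseteq\mathcal{M}^{k,\alpha}(M)_{1}\longrightarrow\mathcal{C}^{k-2,\alpha}_{*}(M),\qquad \Phi(\overline{g})=R_{\overline{g}}-\langle R_{\overline{g}}\rangle_{*}.$$
By construction $\Phi(\overline{g})$ has zero $\overline{g}_{*}$-average, so $\Phi$ does land in the fixed Banach space $\mathcal{C}^{k-2,\alpha}_{*}(M)$; and since subtracting a constant turns $R_{\overline{g}}$ into the zero function exactly when $R_{\overline{g}}$ is constant, one has $\Phi^{-1}(0)=S$. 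The smoothness of $\Phi$ follows from the smooth dependence of $R_{\overline{g}}$ on $\overline{g}$, and its differential is $d\Phi_{\overline{g}_{*}}(h)=\delta R_{\overline{g}_{*}}(h)-\langle\delta R_{\overline{g}_{*}}(h)\rangle_{*}$, i.e.\ the $\overline{g}_{*}$-mean-free part of the first variation of the scalar curvature.

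The heart of the argument is to show that $d\Phi_{\overline{g}_{*}}$ is surjective with complemented kernel, and this is exactly where nondegeneracy enters through the operator $L:=(m-1)\Delta_{\overline{g}_{*}}-R_{\overline{g}_{*}}$ studied at the end of Section~2. I would restrict $h$ to the conformal, Neumann, mean-free directions $h=\psi\overline{g}_{*}$ with $\psi\in\mathcal{C}^{k,\alpha}(M)^{0}$ and $\int_{M}\psi\,\omega_{\overline{g}_{*}}=0$, which is precisely the subspace $W:=T_{\overline{g}_{*}}[\overline{g}_{*}]^{0}_{1}$. The conformal variation formula for the scalar curvature gives $\delta R_{\overline{g}_{*}}(\psi\overline{g}_{*})=(m-1)\Delta_{\overline{g}_{*}}\psi-R_{\overline{g}_{*}}\psi=L\psi$. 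Because $R_{\overline{g}_{*}}$ is constant, $L$ sends $W$ into the mean-free functions (integrating by parts and using $\partial_{\eta_{\overline{g}_{*}}}\psi=0$ gives $\int_{M}\Delta_{\overline{g}_{*}}\psi\,\omega_{\overline{g}_{*}}=0$, while $\int_{M}R_{\overline{g}_{*}}\psi\,\omega_{\overline{g}_{*}}=R_{\overline{g}_{*}}\int_{M}\psi\,\omega_{\overline{g}_{*}}=0$), so $d\Phi_{\overline{g}_{*}}|_{W}=L|_{W}$. By nondegeneracy together with the Fredholm-index-zero property established in Section~2, $L|_{W}$ is an isomorphism onto $\mathcal{C}^{k-2,\alpha}_{*}(M)$. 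Hence $d\Phi_{\overline{g}_{*}}$ is surjective and $T_{\overline{g}_{*}}\mathcal{M}^{k,\alpha}(M)_{1}=\ker d\Phi_{\overline{g}_{*}}\oplus W$; by the regular value theorem for Banach manifolds, $S$ is an embedded submanifold with $T_{\overline{g}_{*}}S=\ker d\Phi_{\overline{g}_{*}}$.

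Strong transversality now drops out of the same splitting. The complement $W$ above is exactly $T_{\overline{g}_{*}}[\overline{g}_{*}]^{0}_{1}$, so $T_{\overline{g}_{*}}S+T_{\overline{g}_{*}}[\overline{g}_{*}]^{0}_{1}=T_{\overline{g}_{*}}\mathcal{M}^{k,\alpha}(M)_{1}$, and since $d\Phi_{\overline{g}_{*}}|_{W}$ is injective we get $T_{\overline{g}_{*}}S\cap T_{\overline{g}_{*}}[\overline{g}_{*}]^{0}_{1}=\{0\}$; this direct-sum decomposition is precisely the strong transversality required for the bifurcation machinery. To pass from the single point $\overline{g}_{*}$ to every point of $S$ near $\overline{g}_{*}$ at which a normalized conformal class is met, I would invoke that nondegeneracy is an open condition: after shrinking $U$, the identical computation applies at each $\overline{g}\in S$ with $L_{\overline{g}}=(m-1)\Delta_{\overline{g}}-R_{\overline{g}}$ and the $\overline{g}$-average in place of the $\overline{g}_{*}$-ones, so $S$ is transverse to each normalized conformal class it crosses.

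The delicate point, and the genuinely new feature relative to the boundaryless setting of \cite{PI12}, is the bookkeeping of the Neumann boundary condition: one must ensure that the conformal directions realizing surjectivity can be taken with $\partial_{\eta_{\overline{g}_{*}}}\psi=0$ and that $L$ restricted to such functions remains an isomorphism onto the mean-free space. This is exactly what the Fredholm analysis closing Section~2 secures, so the remaining work is the routine verification that $\Phi$ is smooth and that the splitting is stable under small perturbations of the metric.
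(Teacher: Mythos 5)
Your proposal is correct in substance, but there is essentially nothing in the paper to compare it against line by line: the paper's entire proof of this proposition is the single sentence that it is ``a direct application of \cite[Proposition~1]{LI12}'', with the earlier remark that the proofs of \cite{LI12} carry over to the boundary case. What you have written is, in effect, a reconstruction of that outsourced argument adapted to the Neumann setting: you realize $S$ as the zero level of the mean-free scalar curvature map $\Phi$, compute $\delta R_{\overline{g}_*}(\psi\overline{g}_*)=(m-1)\Delta_{\overline{g}_*}\psi-R_{\overline{g}_*}\psi$, and convert nondegeneracy plus the index-zero property from Section~2 into the statement that $L|_W\colon W\to\mathcal{C}^{k-2,\alpha}_*(M)$ is an isomorphism (injectivity from nondegeneracy --- note the case $R_{\overline{g}_*}=0$ is saved because constants are excluded from $W$ --- surjectivity from index zero), which simultaneously gives the submanifold structure and the splitting $T_{\overline{g}_*}\mathcal{M}^{k,\alpha}(M)_1=\ker d\Phi_{\overline{g}_*}\oplus T_{\overline{g}_*}[\overline{g}_*]^0_1$; this direct sum is indeed the correct reading of ``strong transversality'', since it is exactly what yields the local uniqueness in the subsequent Corollary. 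Two of your deferred verifications deserve to be spelled out, and both do go through: (i) split surjectivity of the differential is an open condition among bounded operators, which is what lets the regular value theorem apply on all of a shrunken $U$ and not just at $\overline{g}_*$; (ii) at a nearby intersection point $\tilde{g}\in S\cap[\tilde{g}]^0_1$ the relevant operator is $P_*\circ L_{\tilde{g}}$, where $P_*f=f-\langle f\rangle_*$, so one must check that $P_*$ carries the $\tilde{g}$-mean-free functions isomorphically onto the $\overline{g}_*$-mean-free ones --- it does, since both are closed complements of the constants in $\mathcal{C}^{k-2,\alpha}(M)$. (A cosmetic point you share with the paper: for $[\overline{g}_*]^0_1$ to pass through $\overline{g}_*$ one implicitly assumes $H_{\overline{g}_*}=0$; transversality is only meaningful, and only needed, at intersection points, where the mean curvature vanishes automatically.) The trade-off between the two routes is clear: the paper's citation is economical but hides precisely the boundary-specific bookkeeping, while your self-contained argument makes explicit where the Neumann condition, the nondegeneracy hypothesis, and the Fredholm theory each enter, which is the genuinely new content in passing from \cite{PI12} to the case with boundary.
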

\noindent
\begin{proof}
\noindent
	The proof is a direct application of \cite[Proposition~1]{LI12}.
\end{proof}

\begin{corollary}
	Let $\overline{g}_{*} \in \mathcal{M}^{k, \alpha}(M)_{1}$ be a nondegenerate metric on $M$  with constant scalar curvature and vanishing mean curvature. Then, there is an open neighborhood $U$ of $\overline{g}_{*}$ in $\mathcal{M}^{k, \alpha}(M)_{1}$ such that every $\mathcal{C}^{k, \alpha}$-normalized conformal class of metrics in $\mathcal{M}^{k, \alpha}(M)_{1}$ has at most one metric of constant scalar curvature and volume one in $U$.
\end{corollary}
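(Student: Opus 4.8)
The plan is to derive this corollary directly from Proposition~\ref{thm:rigidity}. Fix the submanifold $S=\{\overline g\in U: R_{\overline g}\text{ is constant}\}$ furnished by that proposition, where $U$ is an open neighborhood of $\overline g_{*}$ in $\mathcal{M}^{k,\alpha}(M)_{1}$. The first step is to recast the assertion as a statement about intersections: a metric that lies in $U$, has constant scalar curvature and unit volume, and belongs to a prescribed normalized conformal class $[\tilde g]^{0}_{1}$, is precisely a point of $S\cap[\tilde g]^{0}_{1}$. Indeed, every element of $[\tilde g]^{0}_{1}$ already has unit volume and vanishing mean curvature by definition, so within such a class the only additional requirement is constancy of the scalar curvature. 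Hence it suffices to show that, after shrinking $U$ if necessary, each normalized conformal class meets $S$ in at most one point.

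The essential ingredient is the strong transversality asserted in Proposition~\ref{thm:rigidity}, which I read as the topological direct-sum decomposition
\begin{equation*}
T_{\overline g}S\oplus T_{\overline g}[\overline g]^{0}_{1}=T_{\overline g}\mathcal{M}^{k,\alpha}(M)_{1}
\end{equation*}
at every $\overline g\in S$, with both summands closed and complemented. This means $S$ is an infinitesimal section transverse to the partition of the unit-volume, vanishing-mean-curvature metrics into normalized conformal classes, which form a foliation near $\overline g_{*}$ since each leaf $[\overline g]^{0}_{1}$ is an embedded submanifold with the closed complemented tangent space computed in Section~2.

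To convert this infinitesimal splitting into genuine local uniqueness I would use a foliation chart. Near $\overline g_{*}$ choose a diffeomorphism $\chi$ onto a product $A\times B$ carrying the plaques of the normalized conformal classes onto the slices $\{a\}\times B$, with $T_{\overline g_{*}}[\overline g_{*}]^{0}_{1}$ corresponding to the $B$-direction. The direct-sum condition then says that $\chi(S)$ is tangent to a complement of the $B$-direction at $\chi(\overline g_{*})$; by the inverse function theorem in the Banach setting, after shrinking we may represent $\chi(S)$ as a graph $\{(a,\sigma(a)):a\in A\}$ of a smooth map $\sigma\colon A\to B$. Each slice $\{a\}\times B$ then meets this graph in the single point $(a,\sigma(a))$, so each normalized conformal class intersects $S$ in at most one point inside the chart. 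Taking $U$ contained in the chart domain yields the claim.

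The main obstacle is exactly this last passage: upgrading the pointwise splitting to a uniqueness statement that holds uniformly over all nearby normalized conformal classes at once. Its justification rests on two points. First, the normalized conformal classes must genuinely constitute a foliation near $\overline g_{*}$, so that a foliation chart exists; this follows from their being embedded submanifolds together with the transversality of $\mathcal{M}^{k,\alpha}(M)_{1}$ to the conformal classes recalled in Section~2. Second, the inverse function theorem must apply in the infinite-dimensional manifold $\mathcal{M}^{k,\alpha}(M)_{1}$, which is guaranteed precisely because strong transversality provides a splitting into closed complemented subspaces rather than a merely algebraic direct sum. The graph representation over the common base $A$ is what delivers a single neighborhood $U$ serving every conformal class simultaneously.
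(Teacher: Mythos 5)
Your proposal is correct and follows essentially the same route as the paper: the paper's proof is a one-line appeal to the strong transversality of $S$ (from Proposition~\ref{thm:rigidity}) to the normalized conformal classes, which is exactly the fact you invoke and then expand into the foliation-chart/graph argument. Your version simply supplies the details (direct-sum splitting, graph representation via the inverse function theorem) that the paper leaves implicit.
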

\noindent
\begin{proof}
\noindent
	The fact that the manifold $S$ is transverse to the normalized conformal class guarantees the local uniqueness of intersections.
\end{proof}

\subsection{Bifurcation of solutions}

Let $M$ be a $m$-dimensional compact Riemannian manifold with boundary, $m \geq 3$. Define
$$
\begin{array}{rcl}
\left[ a, b \right] & \longrightarrow & \mathcal{M}^{k, \alpha}(M)_{1}, \ \ \ k \geq 3\\
s & \longmapsto & \overline{g}_{s}
\end{array}
$$
a continuous path of Riemannian metrics on $M$ having constant scalar curvature $R_{\overline{g}_{s}}$ and vanishing mean curvature $H_{\overline{g}_{s}}$, for all $s \in [a,b]$.

\begin{definition}
	{\rm An instant $s_{*} \in [a, b]$ is called a \textbf{bifurcation instant} for the family $\{\overline{g}_{s}\}_{s \in [a,b]}$ if there exists a sequence $(s_{n})_{n \geq 1} \subset [a,b]$ and a sequence $(\overline{g}_{n})_{n \geq 1} \subset \mathcal{M}^{k, \alpha}(M)_{1}$ of Riemannian metrics on $M$ satisfying:
	\begin{enumerate}
		\item[(a)] $\displaystyle\lim_{n \rightarrow \infty} s_{n} = s_{*}$ and $\displaystyle\lim_{n \rightarrow \infty} \overline{g}_{n} = \overline{g}_{s_{*}} \in \mathcal{M}^{k, \alpha}(M)_{1}$;
		\item[(b)] $\overline{g}_{n} \in [\overline{g}_{s_{n}}]$, but $\overline{g}_{n} \neq \overline{g}_{s_{n}}$, for all $n \geq 1$;
		\item[(c)] $\overline{g}_{n}$ has constant scalar curvature and vanishing mean curvature, for all $n \geq 1$.
	\end{enumerate}
	If $s_{*} \in [a,b]$ is not a bifurcation instant, the family $\{\overline{g}_{s}\}_{s \in [a,b]}$ is said \textbf{locally rigid} at $s_{*}$.}
\end{definition}

An instant $s \in [a,b]$ for which $\frac{R_{\overline{g}_{s}}}{(m-1)}$ is a non-vanishing solution of problem (\ref{EP}) is called a \textbf{degeneracy instant} for the family $\{g_{s}\}_{s \in [a,b]}$.

\begin{theorem}\label{thm:bif}
	Let $M$ be a $m$-dimensional compact manifold, with boundary $\partial M \neq \emptyset$, $m \geq 3$, and let
	$$
	\begin{array}{rcl}
	\left[ a, b \right] & \longrightarrow & \mathcal{M}^{k, \alpha}(M)_{1}, \ \ \ k \geq 3 \\
	s & \longmapsto & \overline{g}_{s}
	\end{array}
	$$
	be a $C^1$-path of Riemannian metrics on $M$ having constant scalar curvature $R_{\overline{g}_{s}}$ and vanishing mean curvature $H_{\overline{g}_{s}}$. Denote by $n_{s}$ the number of eigenvalues of the Laplace-Beltrami operator $\Delta_{\overline{g}_{s}}$, with Neumann boundary condition (counted with multiplicity), that are less than $\frac{R_{\overline{g}_{s}}}{(m-1)}$. Assume that, if $s = a$ or $s=b$, $\frac{R_{\overline{g}_{s}}}{(m-1)} = 0$ or it is not an eigenvalue of $\Delta_{\overline{g}_{s}}$ and $n_{a} \neq n_{b}$. Then, there exists a bifurcation instant $s_{*} \in (a,b)$ for the family $\{\overline{g}_{s}\}_{s \in [a,b]}$.
\end{theorem}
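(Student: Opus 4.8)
The plan is to recast the statement as an instance of abstract variational bifurcation, detecting the bifurcation through a jump of the Morse index of the Hilbert--Einstein functional restricted to the normalized conformal classes. The first step is to pass to a fixed domain. Although the classes $[\overline{g}_s]^0_1$ vary with $s$, every metric conformal to $\overline{g}_s$ with vanishing mean curvature can be written as $\phi\,\overline{g}_s$ with $\phi \in \mathcal{C}^{k,\alpha}_+(M)^0$, and the volume-one normalization cuts out a fixed Banach manifold $\mathcal{B}$ of conformal factors, independent of $s$. I would then define a family $\mathcal{F}_s \colon \mathcal{B} \to \mathbb{R}$ by $\mathcal{F}_s(\phi) = F(\phi\,\overline{g}_s)$, whose critical points are exactly the solutions of the Yamabe problem in $[\overline{g}_s]^0_1$. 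The given path becomes a \emph{trivial branch}: the constant factor $\phi \equiv 1$ is a critical point of $\mathcal{F}_s$ for every $s$, since $\overline{g}_s$ has constant scalar curvature and vanishing mean curvature, and $(s,1)$ corresponds to $\overline{g}_s$ itself.

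Next I would compute the Hessian of $\mathcal{F}_s$ at the trivial point $\phi \equiv 1$. By the second-variation formula established in Section~2, this Hessian is the quadratic form
$$\psi \longmapsto \frac{m-2}{2}\int_M \big((m-1)\,\Delta_{\overline{g}_s}\psi - R_{\overline{g}_s}\,\psi\big)\,\psi\ \omega_{\overline{g}_s}$$
on $T_1 \mathcal{B} = \{\psi \in \mathcal{C}^{k,\alpha}(M) : \partial_{\eta_{\overline{g}_s}}\psi = 0,\ \int_M \psi\,\omega_{\overline{g}_s} = 0\}$, so the relevant Jacobi operator is $J_s = (m-1)\,\Delta_{\overline{g}_s} - R_{\overline{g}_s}$ under Neumann boundary conditions, restricted to mean-zero functions. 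As already observed in the excerpt, $J_s$ is self-adjoint and Fredholm of index zero, and it varies continuously with $s$ because the path $s \mapsto \overline{g}_s$ is $C^1$. Its kernel is nontrivial precisely when $\tfrac{R_{\overline{g}_s}}{m-1}$ is a nonzero Neumann eigenvalue, that is, exactly at the degeneracy instants; by hypothesis $J_a$ and $J_b$ are therefore invertible. Counting negative eigenvalues, the Morse index $\mu(J_s)$ agrees with $n_s$ up to the single constant mode excluded by the mean-zero constraint; granting this bookkeeping (discussed below), the assumption $n_a \neq n_b$ forces $\mu(J_a) \neq \mu(J_b)$.

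I would then invoke the abstract variational bifurcation principle, in the spirit of \cite{PI12}: for a $C^2$ family of functionals with a common critical point whose Hessians are Fredholm of index zero, depend continuously on the parameter, and are nondegenerate at the endpoints, a change of Morse index across the interval produces a bifurcation instant $s_* \in (a,b)$. This yields sequences $s_n \to s_*$ and conformal factors $\phi_n \to 1$, with $\phi_n \neq 1$, such that each $\phi_n$ is a critical point of $\mathcal{F}_{s_n}$. Setting $\overline{g}_n = \phi_n\,\overline{g}_{s_n}$ produces metrics in $[\overline{g}_{s_n}]$, distinct from $\overline{g}_{s_n}$, with constant scalar curvature and vanishing mean curvature, and with $\overline{g}_n \to \overline{g}_{s_*}$, which is exactly the definition of a bifurcation instant. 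Complementarily, Proposition~\ref{thm:rigidity} and its corollary guarantee local rigidity at nondegenerate instants, so bifurcation can only occur among the degeneracy instants interior to $(a,b)$.

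I expect the main obstacle to be verifying that this machinery applies in the boundary case, which is the genuinely new feature relative to \cite{PI12}. Concretely, one must check that $\mathcal{F}_s$ is $C^2$ on the fixed manifold $\mathcal{B}$ with a Hessian that is continuous in $s$ and Fredholm of index zero \emph{as an operator respecting the Neumann boundary condition and the mean-zero constraint}; this rests on elliptic regularity for the Neumann problem, and is where the boundary terms appearing in the first and second variations must be controlled. A secondary but necessary point is the precise comparison of $\mu(J_s)$ with $n_s$: using the sign convention for $\Delta_{\overline{g}_s}$ and the exclusion of the constant eigenfunction by the volume constraint, one must confirm that the correction relating the two is the same at $s=a$ and $s=b$, so that the hypothesis $n_a \neq n_b$ genuinely transfers to a jump of the Morse index.
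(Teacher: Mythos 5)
Your proposal is correct and takes essentially the same route as the paper: the paper's entire proof of Theorem~\ref{thm:bif} is a one-line appeal to the non-equivariant bifurcation theorem of \cite{PI12} (Theorem A.2, Appendix A), and your argument spells out exactly that reduction --- fixed manifold of Neumann conformal factors, trivial branch $\phi \equiv 1$, Jacobi operator $J_s = (m-1)\Delta_{\overline{g}_s} - R_{\overline{g}_s}$ self-adjoint and Fredholm of index zero on mean-zero functions, nondegenerate endpoints, Morse-index jump. The bookkeeping caveat you flag is real --- $\mu(J_s) = n_s - 1$ when $R_{\overline{g}_s} > 0$ but $\mu(J_s) = n_s = 0$ when $R_{\overline{g}_s} \le 0$, so $n_a \neq n_b$ need not produce a Morse-index jump if the endpoint scalar curvatures have opposite signs (e.g.\ $n_a = 1$, $n_b = 0$) --- but this is a defect of the theorem as stated, inherited from \cite{PI12} and never addressed by the paper's proof, and it is harmless in the paper's applications, where Theorem~\ref{thm:bif} is only invoked on small intervals around degeneracy instants, near which $R_{\overline{g}_s}$ is necessarily positive.
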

\noindent
\begin{proof}
\noindent
	The result follows from the non-equivariant bifurcation theorem \cite[Theorem~A.2, Appendix A]{PI12}.
\end{proof}

\begin{remark}
{\rm Given a Riemannian manifold $(M, \overline{g})$ with minimal boundary $\partial M \neq \emptyset$, it is important to stress that for all $s \in \mathbb{R}^+$, the manifold $(M, s \overline{g})$ also has minimal boundary, once $H_{s {\overline{g}}} = \frac{1}{\sqrt{s}} H_{{\overline{g}}}$. Moreover, $\Delta_{s \overline{g}} = \frac{1}{s} \Delta_{\overline{g}}$, $R_{s \overline{g}} = \frac{1}{s} R_{\overline{g}}$ and $\eta_{s \overline{g}} = \frac{1}{\sqrt{s}} \eta$. This means that the spectrum of the operator  $\Delta_{\overline{g}} - \frac{R_{\overline{g}}}{m-1}$, with Neumann boundary condition, is invariant by homothety of the metric. On the other hand, $\omega_{s \overline{g}} = s^{\frac{m}{2}} \omega_{\overline{g}}$. When needed we will normalize metrics to have volume $1$ without change the spectral theory of the operator $\Delta_{\overline{g}} - \frac{R_{\overline{g}}}{m-1}$, with Neumann boundary condition.}
\end{remark}

\section{\bf Bifurcation of Solutions of the Yamabe Problem in Product Manifolds}
\def\theequation{4.\arabic{equation}}\makeatother
\setcounter{equation}{0}

Let $(M_{1}, g^{(1)})$ be a compact Riemannian manifold, with $\partial M_{1} = \emptyset$ and constant scalar curvature, and let $(M_{2}, \overline{g}^{(2)})$ be a compact Riemannian manifold with minimal boundary and constant scalar curvature. Consider the product manifold, $M = M_{1} \times M_{2}$, which boundary is given by $\partial M = M_{1} \times \partial M_{2}$. Let $m_{1}$ and $m_{2}$ be the dimensions of $M_{1}$ and $M_{2}$, respectively, and assume that $\mbox{dim}(M) = m = m_{1} + m_{2} \geq 3$. For each $s \in (0, + \infty)$, define ${\overline{g}}_{s} = {g}^{(1)} \oplus s {\overline{g}}^{(2)}$ a family of metrics on $M$. Then $\left\{ {\overline{g}}_{s} \right\}_{s} \subset \mathcal{M}^{k, \alpha}(M)$.

The following statements, briefly justified, are valid.
\begin{itemize}
	\item[(a)] $(M, \overline{g}_{s})$ has constant scalar curvature, for all $s>0$, and its scalar curvature is given by
	$$R_{{\overline{g}}_{s}} = R_{{g}^{(1)}} + R_{s {\overline{g}}^{(2)}} = R_{{g}^{(1)}} + \frac{1}{s} R_{{\overline{g}}^{(2)}}.$$
	\item[(b)] Since we can identify the tangent space of the product manifold $T_{(p,q)}M$ with the direct sum $T_{p} M_{1} \oplus T_{q} M_{2}$, for $p \in M_{1}$ and $q \in M_{2}$, the interior vector field $\eta_{s}$, normal to $\partial M$, can be written as
	$$\eta_{s} = 0 + \frac{1}{\sqrt{s}} \eta_{2},$$
	where $\eta_{2} $ is the interior vector field normal to $\partial M_{2}$.
	\item[(c)] The mean curvature of $\partial M$ is zero, once we have
	$$H_{{\overline{g}}_{s}} = H_{s {\overline{g}}^{(2)}} = \frac{1}{\sqrt{s}} H_{{\overline{g}}^{(2)}}.$$
	\item[(d)] Laplace-Beltrami operator, with respect to ${\overline{g}}_{s}$, is given by
	$$\Delta_{{\overline{g}}_{s}} = \left( \Delta_{{g}^{(1)}} \otimes I \right) + \displaystyle\frac{1}{s} \left( I \otimes \Delta_{{\overline{g}}^{(2)}} \right).$$
	\item[(e)] Consider the family $\{{\overline{g}}_{s}\}_{s >0}$ for the purpose of study bifurcation instants causes no loss of generality. In fact, $s_{*}$ is a bifurcation instant for the family $\left\{ {g}^{(1)} \oplus s {\overline{g}}^{(2)} \right\}_{s>0}$ if and only if $s_{*}$ is a bifurcation instant for the family $\left\{ \frac{1}{s} {g}^{(1)} \oplus {\overline{g}}^{(2)} \right\}_{s>0}$ on $M$. The same is valid for degeneracy instants.
\end{itemize}

Now, considering the remark at the end of previous section, $\{\overline{g}_{s}\}_{s>0}$ is a family of critical points of the functional
$$F: \mathcal{M}^{k, \alpha}(M)_{1} \longrightarrow \mathbb{R},$$
restricted to $[\overline{g}]^{0}_{1}$, so it is a family of solutions with minimal boundary of the Yamabe problem in product manifolds.

In order to investigate the existence of bifurcation instants for the family $\{\overline{g}\}_{s > 0}$, we are interested in study the spectrum of the operator $\mathcal{J}_{s}$
$$\mathcal{J}_{s} = \Delta_{{\overline{g}}_{s}} - \displaystyle\frac{R_{{\overline{g}}_{s}}}{m-1},$$
whose domain is $\left\{ \psi \in \mathcal{C}^{k, \alpha}(M)^0 : \displaystyle\int_{M} \psi \ \omega_{\overline{g}_s} = 0 \right\}$. Denote by $0 = \rho_{0}^{(1)} < \rho_{1}^{(1)} < \rho_{2}^{(1)} < \ldots $ the sequence of all distinct eigenvalues of $\Delta_{{g}^{(1)}}$, with geometric multiplicity $\mu_{i}^{(1)}$, $i \geq 0$, and by $0 = \rho_{0}^{(2)} < \rho_{1}^{(2)} < \rho_{2}^{(2)} < \ldots $ the sequence of all distinct eigenvalues of $\Delta_{{\overline{g}}^{(2)}}$, subjected to Neumann boundary condition,
\begin{equation}
\left\{
\begin{array}{rcl}
\Delta_{{\overline{g}}^{(2)}} f^{(2)} & = & \rho_{j}^{(2)} f^{(2)},\mbox{ on $ M$,} \\
\partial_{\eta_2} f^{(2)} & = & 0, \mbox{ on $\partial M$},
\end{array}
\right.
\label{EP2}
\end{equation}
where $j \geq 0$, with $\mu_{j}^{(2)}$ the geometric multiplicity of $\rho_{j}^{(2)}$, $j  \geq 0$. Then, the spectrum of $\mathcal{J}_{s}$ is given by
$$\Sigma(\mathcal{J}_{s}) = \left\{ \sigma_{i,j} : i, j \geq 0, i + j > 0 \right\},$$
where
$$\sigma_{i,j}(s) = \rho_{i}^{(1)} + \displaystyle\frac{1}{s} \rho_{j}^{(2)} - \displaystyle\frac{1}{m-1} \left( R_{{g}^{(1)}} + \displaystyle\frac{1}{s} R_{{\overline{g}}^{(2)}} \right), $$
are the eigenvalues of $J_{s}$, with Neumann boundary condition on $\partial M$, with geometric multiplicity equal to the product $\mu_{i}^{(1)} \mu_{j}^{(2)}$.

We emphasize that $\sigma_{i,j}$'s are not necessarily all distinct!

\begin{definition}
	{\rm Let $i_{*}$ and $j_{*}$ be the smallest non-negative integers that satisfy
	$$
	\rho_{i_{*}}^{(1)} \geq \displaystyle\frac{R_{{g}^{(1)}}}{m-1}, \ \ \ \rho_{j_{*}}^{(2)} \geq \displaystyle\frac{R_{{\overline{g}}^{(2)}}}{m-1}.
	$$
	We say that the pair of metrics $({g}^{(1)}, {\overline{g}}^{(2)})$ is {\bf degenerate} if equalities hold in both cases, that is, $\sigma_{i_{*}, j_{*}}(s) = 0$, for all $s$. In this situation, the operator $\mathcal{J}_{s}$ is also called degenerate.}
\end{definition}

We can state that, if $R_{{g}^(1)} < 0$ or if $R_{{\overline{g}}^{(2)}} < 0$ and $H_{{\overline{g}}^(2)} = 0$, then $({g}^{(1)}, {\overline{g}}^{(2)})$ is, certainly, non-degenerate. Observe also that, if $({g}^{(1)}, {\overline{g}}^{(2)})$ is degenerate, zero is an eigenvalue of $\mathcal{J}_{s}$, for all $s \in (0, +\infty)$, otherwise, there is only a discrete countable set, $S \subset (0, +\infty)$, of instants $s$ for which the operator $\mathcal{J}_{s}$ is singular. First, we consider the case that both scalar curvatures are positive.

\subsection{The case of positive scalar curvature}

We are interested in studying the zeros of the function $s \mapsto \sigma_{i,j}(s)$, as $i, j$ vary. At first glance, we can already draw some conclusions; for instance, if the function $\sigma_{i,j}$ is not identically zero, for fixed $i,j$, it has at most one zero in $(0, + \infty)$. Let us write $\sigma_{i,j}$ as
$$\sigma_{i,j}(s) = \rho_{i}^{(1)} - \displaystyle\frac{R_{{g}^{(1)}}}{m-1}  + \displaystyle\frac{1}{s} \left(  \rho_{j}^{(2)} - \displaystyle\frac{R_{{\overline{g}}^{(2)}}}{m-1}  \right). $$
Derive
$$\sigma'_{i,j}(s) = - \displaystyle\frac{1}{s^2} \left(  \rho_{j}^{(2)} - \displaystyle\frac{ R_{{\overline{g}}^{(2)}} }{m-1}\right), $$
so, $\sigma'_{i,j}(s) = 0$ if and only if $\rho_{j}^{(2)} = \displaystyle\frac{R_{{\overline{g}}^{(2)}}}{m-1}$ if and only if $R_{{\overline{g}}^{(2)}}$ is a solution of (\ref{EP2}); and if $\sigma'_{i,j}(s) = 0$ for some $s$, then $\sigma'_{i,j}(s) = 0$ for all $s \in (0, +\infty)$. Hence, $\sigma$ is strictly monotone or constant, so if it is not identically null (constant), it certainly has at most one zero (strictly monotone).

\begin{lemma}\label{Lemma}
	Assume that the pair $({g}^{(1)}, {\overline{g}}^{(2)})$ is non-degenerate and that \linebreak $R_{{g}^{(1)}}, R_{{\overline{g}}^{(2)}} > 0$, with $H_{{\overline{g}}^{(2)}}=0$. Then the functions $\sigma_{i,j}(s)$ satisfy the following properties.
	\begin{enumerate}
		\item[(a.)] For all $i, j \geq 0$, the map $s \mapsto \sigma_{i,j}(s)$ is strictly monotone in $(0, +\infty)$, except the maps $\sigma_{i,j_{*}}$, that are constants equal to $\rho_{i}^{(1)} - \displaystyle\frac{R_{{g}^{(1)}}}{m-1}$, when $\rho_{j_{*}}^{(2)} = \displaystyle\frac{R_{{\overline{g}}^{(2)}}}{m-1}$.
		\item[(b.)] For $i \neq i_{*}$ and $j \neq j_{*}$, the map $\sigma_{i,j}(s)$ admits a zero if and only if:
		\begin{itemize}
			\item either $j < j_{*}$ and $i > i_{*}$, in which case $\sigma_{i,j}$ is strictly increasing,
			\item or if $j>j_{*}$ and $i < i_{*}$, in which case $\sigma_{i,j}$ is strictly decreasing.
		\end{itemize}
		\item[(c.)] If $\rho_{i_{*}}^{(1)} = \displaystyle\frac{R_{{g}^{(1)}}}{m-1}$, then $\sigma_{i_{*}, j}$ does not have zeros for any $j \in (0, +\infty)$. If $\rho_{i_{*}}^{(1)} > \displaystyle\frac{R_{{g}^{(1)}}}{m-1}$, then $\sigma_{i_{*},j}$ has a zero if and only if $j < j_{*}$.
		\item[(d.)] If $\rho_{j_{*}}^{(2)} = \displaystyle\frac{R_{\overline{g}}^{(2)}}{m-1}$, then $\sigma_{i, j_{*}}$ does not have zeros for any $i \in (0, +\infty)$. If $\rho_{j_{*}}^{(2)} > \displaystyle\frac{R_{{\overline{g}}^{(2)}}}{m-1}$, then $\sigma_{i,j_{*}}$ has a zero if and only if $i < i_{*}$.
	\end{enumerate}
\end{lemma}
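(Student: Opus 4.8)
The plan is to reduce the entire statement to an elementary sign analysis of the two building blocks of $\sigma_{i,j}$. First I would introduce the shorthand
$$A_i := \rho_i^{(1)} - \frac{R_{g^{(1)}}}{m-1}, \qquad B_j := \rho_j^{(2)} - \frac{R_{\overline{g}^{(2)}}}{m-1},$$
so that $\sigma_{i,j}(s) = A_i + \frac{1}{s} B_j$ and, as already computed in the excerpt, $\sigma'_{i,j}(s) = -\frac{1}{s^2} B_j$. The decisive observation is that, since the eigenvalue sequences $(\rho_i^{(1)})$ and $(\rho_j^{(2)})$ are strictly increasing and $i_*$, $j_*$ are by definition the first indices at which $A$ and $B$ become nonnegative, the signs of $A_i$ and $B_j$ are completely controlled by comparison of the index with $i_*$ and $j_*$: one has $A_i < 0$ for $i < i_*$, $A_{i_*} \geq 0$, and $A_i > 0$ for $i > i_*$, and symmetrically for $B_j$. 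In particular $A_i = 0$ can occur only at $i = i_*$ and $B_j = 0$ only at $j = j_*$, and the non-degeneracy hypothesis is precisely the assertion that $A_{i_*}$ and $B_{j_*}$ do not vanish simultaneously.

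With this in hand, part (a) follows immediately from the sign of $\sigma'_{i,j} = -\frac{1}{s^2}B_j$: the map is constant exactly when $B_j = 0$, which by the previous paragraph forces $j = j_*$ together with $\rho_{j_*}^{(2)} = \frac{R_{\overline{g}^{(2)}}}{m-1}$, and is strictly monotone otherwise, increasing when $B_j < 0$ and decreasing when $B_j > 0$. For the zero-existence statements I would use the criterion that, for $s > 0$, the equation $\sigma_{i,j}(s) = 0$ has a solution in $(0, +\infty)$ if and only if $A_i$ and $B_j$ are both nonzero and of opposite sign, the unique zero then being $s = -B_j/A_i > 0$. Part (b), where $i \neq i_*$ and $j \neq j_*$ so that both building blocks are strictly signed, is then a direct case split: opposite signs means either $A_i > 0$ and $B_j < 0$, i.e. $i > i_*$ and $j < j_*$ with $\sigma_{i,j}$ strictly increasing, or $A_i < 0$ and $B_j > 0$, i.e. $i < i_*$ and $j > j_*$ with $\sigma_{i,j}$ strictly decreasing.

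Parts (c) and (d) are the boundary cases and are where non-degeneracy is genuinely used; they are symmetric, so I would prove (c) in detail and obtain (d) by interchanging the two factors. If $A_{i_*} = 0$, then $\sigma_{i_*,j}(s) = \frac{1}{s}B_j$, and since $A_{i_*} = 0$ forces $B_{j_*} \neq 0$ by non-degeneracy while $B_j$ can vanish only at $j = j_*$, we get $B_j \neq 0$ for every $j$, so $\sigma_{i_*,j}$ never vanishes. If instead $A_{i_*} > 0$, the opposite-sign criterion gives a zero if and only if $B_j < 0$, i.e. if and only if $j < j_*$. The one point requiring care, and the main (if modest) obstacle, is the bookkeeping at the critical indices: one must keep the weak inequality that holds at $i_*$ (resp. $j_*$) separate from the strict inequality beyond it, and invoke non-degeneracy at exactly the step where $A_{i_*}$ or $B_{j_*}$ could a priori be zero, so as to conclude that the other block is then strictly signed. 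Everything else is routine.
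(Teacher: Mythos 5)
Your proposal is correct and follows essentially the same route as the paper, whose proof consists of the single remark that everything ``follows directly from a straightforward analysis of the expression'' $\sigma_{i,j}(s) = \bigl(\rho_i^{(1)} - \tfrac{R_{g^{(1)}}}{m-1}\bigr) + \tfrac{1}{s}\bigl(\rho_j^{(2)} - \tfrac{R_{\overline{g}^{(2)}}}{m-1}\bigr)$; your $A_i$, $B_j$ bookkeeping, the opposite-sign criterion for the existence of the unique zero $s = -B_j/A_i$, and the careful use of non-degeneracy to rule out $A_{i_*} = B_{j_*} = 0$ are precisely that analysis, carried out in full. No gaps.
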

\noindent
\begin{proof}
\noindent
	The entire statement follows directly from a straightforward analysis of the expression
	$$\sigma_{i,j}(s) = \left( \rho_{i}^{(1)} - \displaystyle\frac{R_{{g}^{(1)}}}{m-1} \right) + \displaystyle\frac{1}{s} \left( \rho_{j}^{(2)} - \displaystyle\frac{R_{{\overline{g}}^{(2)}}}{m-1} \right).$$
\end{proof}

\begin{corollary}\label{thm:cor}
	If $({g}^{(1)}, {\overline{g}}^{(2)})$ is nondegenerate, then the set $S \subset (0, +\infty)$, of instants $s$ at which $\mathcal{J}_{s}$ is singular, is countable and discrete. More precisely, it consists of a strictly decreasing sequence $(s_{n}^{(1)})_{n}$ tending to $0$ and a strictly increasing unbounded sequence $(s_{n}^{(2)})_{n}$. For all other values of $s$, $\mathcal{J}_{s}$ is an isomorphism and in particular, the family $\{{\overline{g}}_{s}\}_{s>0}$ is locally rigid at these instants.
\end{corollary}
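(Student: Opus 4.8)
The plan is to reduce everything to the explicit eigenvalue formula together with the monotonicity already established in Lemma~\ref{Lemma}. Writing $a_i = \rho_i^{(1)} - \frac{R_{g^{(1)}}}{m-1}$ and $b_j = \rho_j^{(2)} - \frac{R_{\overline{g}^{(2)}}}{m-1}$, the operator $\mathcal{J}_s$ is singular precisely when some $\sigma_{i,j}(s) = a_i + \frac{1}{s}b_j$ vanishes with $i+j>0$; when $a_i \neq 0$ this forces the unique candidate $s_{i,j} = -b_j/a_i$, which lies in $(0,+\infty)$ if and only if $a_i$ and $b_j$ have opposite signs. Since $R_{g^{(1)}}, R_{\overline{g}^{(2)}} > 0$ while $\rho_0^{(1)} = \rho_0^{(2)} = 0$, one has $i_*, j_* \geq 1$, and the candidates split into the two families identified in part (b.) of Lemma~\ref{Lemma}: Family A with $a_i > 0$ and $b_j < 0$ (so $j \in \{0,\dots,j_*-1\}$, a finite set, while $i$ ranges over an infinite tail and $\sigma_{i,j}$ is increasing), and Family B with $a_i < 0$ and $b_j > 0$ (so $i \in \{0,\dots,i_*-1\}$ is finite while $j$ ranges over an infinite tail and $\sigma_{i,j}$ is decreasing). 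The borderline row $i=i_*$ and column $j=j_*$ are either absorbed into these families or yield no positive zero, according to whether $a_{i_*}$, $b_{j_*}$ equal $0$ or are strictly positive; nondegeneracy guarantees they do not vanish simultaneously.

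Countability is then immediate, since $S \subseteq \{s_{i,j} : i,j \geq 0\}$. For discreteness and the asymptotics I would argue as follows. In Family A the index $j$ takes only finitely many values, while for each such $j$ one has $a_i \to +\infty$ as $i \to \infty$; hence $s_{i,j} = |b_j|/a_i$ is, for fixed $j$, strictly decreasing in $i$ and tends to $0$, and given any $\varepsilon>0$ the inequality $s_{i,j} \geq \varepsilon$ amounts to $a_i \leq |b_j|/\varepsilon$, which holds for only finitely many pairs. Symmetrically, in Family B the index $i$ is confined to a finite set while $b_j \to +\infty$, so $s_{i,j} = b_j/|a_i| \to +\infty$ and only finitely many pairs satisfy $s_{i,j} \leq N$ for any $N$. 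Consequently every compact subinterval $[\varepsilon,N] \subset (0,+\infty)$ contains only finitely many singular instants, so $S$ is discrete; listing the distinct values of Family A in decreasing order produces the sequence $(s_n^{(1)})$ with $s_n^{(1)} \to 0$, and the distinct values of Family B in increasing order produce the unbounded strictly increasing sequence $(s_n^{(2)})$.

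For the rigidity claim, I would note that if $s \notin S$ then no $\sigma_{i,j}(s)$ vanishes, so $0 \notin \Sigma(\mathcal{J}_s)$ and $\mathcal{J}_s$ is injective; because $\mathcal{J}_s$ is a nonzero scalar multiple of $(m-1)\Delta_{\overline{g}_s} - R_{\overline{g}_s}$, which is Fredholm of index zero (as recorded in Section~2), injectivity upgrades to an isomorphism. Equivalently, $\frac{R_{\overline{g}_s}}{m-1}$ is not a Neumann eigenvalue of $\Delta_{\overline{g}_s}$, so $\overline{g}_s$ is a nondegenerate constant scalar curvature metric with vanishing mean curvature; Proposition~\ref{thm:rigidity} and its Corollary then furnish a neighborhood of $\overline{g}_s$ in which each normalized conformal class carries at most one such metric of unit volume, which rules out the sequences of distinct solutions demanded by the definition of a bifurcation instant. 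Hence the family is locally rigid at every $s \notin S$.

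The step I expect to be the main obstacle is the bookkeeping that isolates the two accumulation behaviors: one must see clearly that the sign constraints $b_j<0$ and $a_i<0$ pin one of the two indices to a finite range, so that the divergence $\rho_i^{(1)} \to \infty$ (respectively $\rho_j^{(2)} \to \infty$) drives the corresponding instants to $0$ (respectively $+\infty$), and one must handle the borderline indices $i=i_*$, $j=j_*$ with care, invoking nondegeneracy to exclude the simultaneous vanishing of $a_{i_*}$ and $b_{j_*}$.
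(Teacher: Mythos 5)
Your proof is correct and follows essentially the same route as the paper: the same reduction to the explicit zeros $s_{i,j} = -b_j/a_i$, the same sign analysis splitting them into an increasing family (zeros accumulating at $0$) and a decreasing family (zeros accumulating at $+\infty$), and the same appeal to Proposition~\ref{thm:rigidity} for local rigidity away from the singular set. Your bookkeeping is somewhat more explicit than the paper's --- notably the finiteness of singular instants in any compact subinterval $[\varepsilon,N]$ and the Fredholm index-zero step upgrading injectivity of $\mathcal{J}_s$ to an isomorphism --- but these are refinements of the identical argument, not a different approach.
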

\noindent
\begin{proof}
\noindent
	By Lemma \ref{Lemma}, each function $\sigma_{i,j}$ has at most one zero, thus there is only a countable number of degeneracy instants for $\mathcal{J}_{s}$. Let $s_{ij}$ be the zero of $\sigma_{ij}$, then
	$$0 < s_{ij} = - \displaystyle\frac{\rho^{(2)}_{j} - \frac{R_{{\overline{g}}^{(2)}}}{m-1}}{\rho_{i}^{(1)} - \frac{R_{{g}^{(1)}}}{m-1}}. $$
	Now, we study the behavior of these zeros in two cases:
	\begin{itemize}
		\item if $j>j_{*}$ and $i<i_{*}$, as $j \rightarrow +\infty$,
		$$s_{ij} = \displaystyle\frac{\rho^{(2)}_{j} - \frac{R_{{\overline{g}}^{(2)}}}{m-1}}{\frac{R_{{g}^{(1)}}}{m-1} - \rho_{i}^{(1)}} \geq \left( \rho^{(2)}_{j} - \displaystyle\frac{R_{{\overline{g}}^{(2)}}}{m-1} \right) \displaystyle\frac{1}{\frac{R_{{g}^{(1)}}}{m-1} - \rho_{i_{*} - 1}^{(1)}} \rightarrow +\infty. $$
		\item if $i>i_{*}$ and $j<j_{*}$, as $i \rightarrow +\infty$,
		$$s_{ij} = \displaystyle\frac{\frac{R_{{\overline{g}}^{(2)}}}{m-1} - \rho^{(2)}_{j} }{\rho_{i}^{(1)} - \frac{R_{{g}^{(1)}}}{m-1}} \leq \displaystyle\frac{1}{\rho_{i}^{(1)} - \frac{R_{{g}^{(1)}}}{m-1}} \left( \displaystyle\frac{R_{{\overline{g}}^{(2)}}}{m-1} - \rho^{(2)}_{1} \right)  \rightarrow 0. $$
	\end{itemize}
	Therefore, all the zeros of the eigenvalues $\sigma_{i,j}$ accumulates only at $0$ and at $+\infty$. Let $s_{*} \in (0, +\infty) \backslash S$. Then, $\mathcal{J}_{s_{*}}$ is an isomorphism, that is, $0 \notin \Sigma(J_{s*})$ or, equivalently, $\displaystyle\frac{R_{\overline{g}_{s_{*}}}}{m-1}$ is not an eigenvalue of $\Delta_{\overline{g}_{s_{*}}}$. So, $\overline{g}_{s_{*}} \in \{{\overline{g}}_{s}\}_{s>0}$ is a nondegenerate metric. It follows from Proposition~\ref{thm:rigidity} that the family $\{{\overline{g}}_{s}\}_{s>0}$ is locally rigid at $s_{*}$.
\end{proof}

Note that $R_{\overline{g}}$ is obviously different from zero, once we are considering only \textsl{positive} scalar curvature.

\begin{theorem}\label{thm:principal}
	Let $(M_{1}, {g}^{(1)})$ be a compact Riemannian manifold with positive constant scalar curvature and $(M_{2}, {\overline{g}}^{(2)})$ a compact manifold with boundary, having positive constant scalar curvature and minimal boundary $\partial M_2$. Assume that the pair $({g}^{(1)}, {\overline{g}}^{(2)})$ is nondegenerate. For all $s \in (0, +\infty)$, let $\overline{g}_{s} = {g}^{(1)} \oplus s {\overline{g}}^{(2)}$ be the metric on the product manifold with boundary, $M = M_{1} \times M_{2}$. Then there exist a sequence tending to $0$ and  a sequence tending to $+\infty$ consisting of bifurcation instants for the family $\{\overline{g}_{s}\}_{s>0}$.
\end{theorem}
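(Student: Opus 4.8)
The plan is to produce the two sequences of bifurcation instants by applying the bifurcation criterion of Theorem~\ref{thm:bif} on a family of short subintervals of $(0,+\infty)$, one family shrinking toward $0$ and another escaping to $+\infty$, each chosen so that the count $n_s$ differs at its two endpoints. Throughout I regard the path $s\mapsto\overline g_s$ as taking values in $\mathcal M^{k,\alpha}(M)_1$ after the harmless volume normalization justified by the Remark at the end of Section~3: rescaling $\overline g_s$ by the homothety $\mathcal V(\overline g_s)^{-2/m}$ leaves the Neumann spectrum of $\Delta_{\overline g_s}-\tfrac{R_{\overline g_s}}{m-1}$, hence the integer $n_s$ and the set of (non)degenerate instants, unchanged, and the normalized path is still $C^1$ because $\mathcal V(\overline g_s)$ depends smoothly and positively on $s$.

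First I would record the behavior of $n_s$, the number of Neumann eigenvalues of $\Delta_{\overline g_s}$ below $\tfrac{R_{\overline g_s}}{m-1}$ counted with multiplicity. By the diagonalization in Section~4 this equals the number of pairs $(i,j)$, weighted by $\mu_i^{(1)}\mu_j^{(2)}$, for which $\sigma_{i,j}(s)<0$; it is finite for each $s$ since $\rho_i^{(1)},\rho_j^{(2)}\to+\infty$. By Lemma~\ref{Lemma} every $\sigma_{i,j}$ is either constant (the case $j=j_*$, which never contributes a sign change) or strictly monotone with a single zero $s_{ij}$, so $n_s$ is locally constant on $(0,+\infty)\setminus S$ and changes precisely across the points of $S$.

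Next I would localize the two types of crossing. As $s$ increases through a zero of a strictly increasing eigenvalue (these are exactly the $\sigma_{i,j}$ with $i>i_*$, $j<j_*$ by Lemma~\ref{Lemma}(b)), $\sigma_{i,j}$ passes from negative to positive and $n_s$ drops; through a zero of a strictly decreasing eigenvalue ($i<i_*$, $j>j_*$) it rises. The estimates in the proof of Corollary~\ref{thm:cor} show that the increasing-type zeros accumulate only at $0$ while the decreasing-type zeros accumulate only at $+\infty$; consequently there exist $\delta>0$ and $N<\infty$ such that every element of $S\cap(0,\delta)$ is of increasing type and every element of $S\cap(N,+\infty)$ is of decreasing type. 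On each of these two pure regions all crossings move $n_s$ in a single direction, so no cancellation of jumps can occur: $n_s$ strictly decreases across $(0,\delta)\cap S$ and strictly increases across $(N,+\infty)\cap S$.

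Finally I would assemble the sequences. Write the increasing-type instants in $(0,\delta)$ as $s_1^{(1)}>s_2^{(1)}>\cdots\to 0$. For each $n$ pick nondegenerate endpoints $a_n,b_n$ with $s_{n+1}^{(1)}<a_n<s_n^{(1)}<b_n<s_{n-1}^{(1)}$ (setting $s_0^{(1)}:=\delta$), possible since $S$ is discrete; then $[a_n,b_n]\cap S=\{s_n^{(1)}\}$, both endpoints satisfy the nondegeneracy hypothesis of Theorem~\ref{thm:bif}, and $n_{a_n}>n_{b_n}$ by the previous step. Theorem~\ref{thm:bif} then yields a bifurcation instant $\tilde s_n\in(a_n,b_n)$, and since $a_n,b_n\to 0$ we obtain a sequence of bifurcation instants tending to $0$. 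The identical construction on $(N,+\infty)$ with the decreasing-type instants produces a sequence tending to $+\infty$. The one point demanding care, and the step I expect to be the main obstacle, is ensuring $n_{a_n}\neq n_{b_n}$ when several eigenvalues vanish simultaneously at one instant: it is precisely the separation of increasing- and decreasing-type zeros near the two ends, inherited from Corollary~\ref{thm:cor}, that rules out cancellation and hence guarantees the applicability of the bifurcation theorem.
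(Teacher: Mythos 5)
Your proposal is correct and follows essentially the same route as the paper: both use the separation of increasing-type zeros (accumulating only at $0$) from decreasing-type zeros (accumulating only at $+\infty$) established in Corollary~\ref{thm:cor} to rule out cancellation of jumps in $n_s$, isolate each degeneracy instant in a small interval with nondegenerate endpoints, and apply Theorem~\ref{thm:bif} there. The only cosmetic difference is that the paper identifies the bifurcation instants as the degeneracy instants $s_n^{(1)}, s_n^{(2)}$ themselves, while you only locate a bifurcation instant in each shrinking interval, which suffices for the statement.
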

\noindent
\begin{proof}
\noindent
	We prove that bifurcation instants consist of subsequences of the two sequences of instants where $\mathcal{J}_{s}$ is singular, whose existence was proved above.
	
	With the same notation used in Corollary~\ref{thm:cor}, let $n_{0} > 0$ be such that $s_{n}^{(1)} < s_{1}^{(2)}$ and $s_{1}^{(1)} < s_{n}^{(2)}$, for all $n > n_{0}$. Then, there is a $\varepsilon > 0$, for all $n > n_{0}$, such that the operator $\mathcal{J}_{( \cdot )}$ is an isomorphism on the intervals $[s_{n}^{(1)} - \varepsilon, s_{n}^{(1)} + \varepsilon]$ and $[s_{n}^{(2)} - \varepsilon, s_{n}^{(2)} + \varepsilon]$, except for the instants $s_{n}^{(1)}$ and $s_{n}^{(2)}$ themselves.
	
	As the zeros of the increasing eigenvalue functions accumulate at $0$ and the zeros of the decreasing eigenvalue functions accumulate at $\infty$, if $\sigma_{p,q}$ is a non-increasing eigenvalue function, for all $s \in (0, s_{n}^{(1)} + \varepsilon]$, $n > n_{0}$, we have $\sigma_{p,q}(s) \neq 0$. So, $\sigma_{p,q}(s_{n}^{(1)} - \varepsilon) < 0$ if and only if $\sigma_{p,q}(s_{n}^{(1)} + \varepsilon) < 0$. On other hand, if we consider an increasing eigenvalue function $\sigma_{i,j}$, for all $n > n_{0}$, we have $\sigma_{i,j}(s_{n}^{(1)}) = 0$, $\sigma_{i,j}(s_{n}^{(1)} - \varepsilon) < 0$, $\sigma_{i,j}(s_{n}^{(1)} + \varepsilon) > 0$ and the fact that $s_{n}^{(1)} < s_{1}^{(2)}$ ensures that there is no decreasing function that vanishes at $s_{n}^{(1)}$. Hence, we can surely conclude that $n_{s_{n}^{(1)} - \varepsilon} \neq n_{s_{n}^{(1)} + \varepsilon}$. By Theorem~\ref{thm:bif} it follows that the subsequence $(s_{n}^{(1)})_{n > n_{0}}$ is the sought sequence of bifurcation instants tending to $0$.
	
	Now, analyzing non-decreasing eigenvalue function and decreasing eigenvalue function in a similar way, we obtain $n_{s_{n}^{(2)} - \varepsilon} \neq n_{s_{n}^{(2)} + \varepsilon}$ and can apply Theorem~\ref{thm:bif} to conclude that the subsequence $(s_{n}^{(2)})_{n > n_{0}}$ is the sought sequence of bifurcation instants tending to $\infty$.
\end{proof}

Note that the case of degenerate pairs cannot be treated with Theorem~\ref{thm:bif}, because in this case, the zero is present in the spectrum of the operator $\mathcal{J}_{s}$, for all $s \in (0, +\infty)$, and thus the hypothesis of the theorem are never satisfied. Another interesting observation is that at degeneracy instants $s$ between $s_{1}^{(1)}$ and $s_{1}^{(2)}$ we do not know, by Theorem ~\ref{thm:principal}, if bifurcation occurs.

\subsection{The case of non-positive scalar curvature}

Now, consider the family $\{\overline{g}_{s}\}_{s>0}$ on the product manifold $M_{1} \times M_{2}$ in the case when one or both scalar curvatures, $R_{g^{(1)}}$ or $R_{\overline{g}^{(2)}}$, are non-positive, maintaining the Neumann boundary condition on $\partial M_{2}$. Observe that, if $R_{g^{(1)}}$ and $R_{\overline{g}^{(2)}}$ are both non-positive, the pair $(g^{(1)}, \overline{g}^{(2)})$ is nondegenerate.

If $R_{g^{(1)}} \leq 0$ and $R_{\overline{g}^{(2)}} > 0$, then the pair $(g^{(1)}, \overline{g}^{(2)})$ is degenerate if and only if $R_{g^{(1)}} = 0$ and $\rho_{j_{*}}^{(2)} = \frac{R_{{\overline{g}}^{(2)}}}{m-1}$, as $\rho_{i}^{(1)} \geq 0$, $\forall i \in \{0,1,2,\ldots\}$.

\begin{theorem}\label{thm:secondmain} The following statements are true.
	\begin{itemize}
		\item[(a)] If $R_{g^{(1)}} \leq 0 $ and $R_{\overline{g}^{(2)}} \leq 0$, then the family $\{\overline{g}_{s}\}_{s>0}$ has no degeneracy instants, and thus it is locally rigid at every $s \in (0, +\infty)$.
		\item[(b)] If $R_{g^{(1)}} \leq 0 $, $R_{\overline{g}^{(2)}} > 0$, and the pair $(g^{(1)}, \overline{g}^{(2)})$ is nondegenerate, then the set of degeneracy instants for $\mathcal{J}_{s}$ is a strictly decreasing sequence $(s_{n})_{n \in \mathbb{N}}$ that converges to $0$ as $n \longrightarrow \infty$. Moreover, every degeneracy instant is a bifurcation instant for the family $\{\overline{g}_{s}\}_{s>0}$
		\item[(c)]Symmetrically, if $R_{g^{(1)}} > 0 $, $R_{\overline{g}^{(2)}} \leq 0$, and the pair $(g^{(1)}, \overline{g}^{(2)})$ is nondegenerate, then the set of degeneracy instants for $\mathcal{J}_{s}$ is a strictly increasing unbounded sequence $(s_{n})_{n \in \mathbb{N}}$ and every degeneracy instant is a bifurcation instant for the family $\{\overline{g}_{s}\}_{s>0}$.
	\end{itemize}
\end{theorem}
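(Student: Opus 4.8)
The plan is to analyze the sign behavior of the eigenvalue functions $\sigma_{i,j}(s)$ in each of the three curvature regimes, using the explicit formula
$$\sigma_{i,j}(s) = \left( \rho_{i}^{(1)} - \frac{R_{g^{(1)}}}{m-1} \right) + \frac{1}{s}\left( \rho_{j}^{(2)} - \frac{R_{\overline{g}^{(2)}}}{m-1} \right),$$
together with Theorem~\ref{thm:bif}. Throughout I write $A_i = \rho_i^{(1)} - \frac{R_{g^{(1)}}}{m-1}$ and $B_j = \rho_j^{(2)} - \frac{R_{\overline{g}^{(2)}}}{m-1}$, so $\sigma_{i,j}(s) = A_i + \frac{1}{s}B_j$, and I recall from the discussion preceding Lemma~\ref{Lemma} that a nonconstant $\sigma_{i,j}$ has at most one zero in $(0,+\infty)$, located at $s_{ij} = -B_j/A_i$, and that this zero is positive precisely when $A_i$ and $B_j$ have opposite signs.

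For part~(a), with $R_{g^{(1)}} \le 0$ and $R_{\overline{g}^{(2)}} \le 0$, the constants $-\frac{R_{g^{(1)}}}{m-1}$ and $-\frac{R_{\overline{g}^{(2)}}}{m-1}$ are both nonnegative, so $A_i \ge \rho_i^{(1)} \ge 0$ and $B_j \ge \rho_j^{(2)} \ge 0$ for all $i,j$. Hence $\sigma_{i,j}(s) \ge 0$ everywhere, and it vanishes only if both $A_i = 0$ and $B_j = 0$, which forces $i=j=0$ (since the lowest eigenvalues are zero and the curvatures are nonpositive); but the spectrum of $\mathcal{J}_s$ excludes the index $i=j=0$, so $0 \notin \Sigma(\mathcal{J}_s)$ for every $s$. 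Thus there are no degeneracy instants, every $\overline{g}_s$ is nondegenerate, and local rigidity at each $s$ follows from Proposition~\ref{thm:rigidity}, exactly as in the last step of Corollary~\ref{thm:cor}.

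For parts~(b) and~(c), which are symmetric under exchanging the two factors (equivalently, the substitution $s \mapsto 1/s$ noted in item~(e) at the start of Section~4), I treat~(b) and indicate the obvious changes for~(c). Here $R_{g^{(1)}} \le 0$ gives $A_i \ge \rho_i^{(1)} \ge 0$ with $A_i = 0$ only when $R_{g^{(1)}} = 0$ and $i = 0$; meanwhile $R_{\overline{g}^{(2)}} > 0$ makes $B_j$ negative for small $j$ and eventually positive, changing sign exactly once (with $B_{j_*} \ge 0$ by definition of $j_*$; nondegeneracy rules out the simultaneous degenerate equalities). A positive zero $s_{ij} = -B_j/A_i$ therefore exists precisely when $A_i > 0$ and $B_j < 0$, i.e.\ for $j < j_*$ and $i$ with $A_i > 0$ (all $i$ when $R_{g^{(1)}} < 0$, and $i \ge 1$ when $R_{g^{(1)}} = 0$); for such pairs $\sigma_{i,j}$ is strictly \emph{increasing}. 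Since only finitely many indices $j$ satisfy $j < j_*$, while $A_i \to +\infty$, the zeros $s_{ij} = -B_j/A_i$ are bounded and accumulate only at $0$ (the same estimate as in the first bullet of Corollary~\ref{thm:cor}, now reading $s_{ij} \le \frac{1}{A_i}\max_{j<j_*}(-B_j) \to 0$). This yields a strictly decreasing sequence $(s_n)_n \to 0$ of degeneracy instants. To upgrade these to bifurcation instants I run the argument from Theorem~\ref{thm:principal}: near each $s_n$, every eigenvalue function crossing zero is increasing, so the Morse-type count $n_s$ (eigenvalues below $\tfrac{R_{\overline{g}_s}}{m-1}$, i.e.\ negative $\sigma_{i,j}(s)$) strictly changes as $s$ passes $s_n$, giving $n_{s_n - \varepsilon} \ne n_{s_n + \varepsilon}$ for small $\varepsilon$, and Theorem~\ref{thm:bif} applies on $[s_n - \varepsilon, s_n + \varepsilon]$.

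The main obstacle is the bifurcation step, not the sign bookkeeping. Unlike Theorem~\ref{thm:principal}, where two families of zeros (increasing and decreasing) could in principle collide and cancel in the jump count, here the key simplification is that \textbf{all} zeros arise from strictly increasing $\sigma_{i,j}$ (in case~(b)) or strictly decreasing ones (in case~(c)), so no cancellation can occur and every degeneracy instant produces a genuine jump in $n_s$; I must verify carefully that at a given $s_n$ the contributing eigenvalues all cross with the same sign and that $\frac{R_{\overline{g}_s}}{m-1}$ is not an eigenvalue at the endpoints $s_n \pm \varepsilon$, so that Theorem~\ref{thm:bif} is legitimately invoked. For case~(c), replacing the roles of the two factors makes $B_j \ge \rho_j^{(2)} \ge 0$ and lets $A_i$ change sign once, so the relevant zeros come from strictly decreasing $\sigma_{i,j}$ and accumulate at $+\infty$ via the estimate in the second bullet of Corollary~\ref{thm:cor}, producing the strictly increasing unbounded sequence; the bifurcation argument is identical after this reflection.
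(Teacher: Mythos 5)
Your proposal is correct and follows essentially the same route as the paper: sign analysis of $\sigma_{i,j}(s)=\bigl(\rho_i^{(1)}-\tfrac{R_{g^{(1)}}}{m-1}\bigr)+\tfrac{1}{s}\bigl(\rho_j^{(2)}-\tfrac{R_{\overline{g}^{(2)}}}{m-1}\bigr)$ for part (a), the same accumulation estimate on the zeros $s_{ij}$ for parts (b) and (c), and the jump in $n_s$ at each isolated degeneracy instant combined with Theorem~\ref{thm:bif}. Your explicit remark that in cases (b) and (c) all crossings share the same monotonicity, so no cancellation in the counting function can occur, is exactly what makes the paper's phrase ``arguing as in the proof of Theorem~\ref{thm:principal}'' legitimate, and is a nice clarification rather than a different argument.
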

\noindent
\begin{proof}
\noindent
	The result follows from an analysis of the functions
	$$\sigma_{i,j}(s) = \rho_{i}^{(1)} + \displaystyle\frac{1}{s} \rho_{j}^{(2)} - \displaystyle\frac{1}{m-1} \left( R_{{g}^{(1)}} + \displaystyle\frac{1}{s} R_{{\overline{g}}^{(2)}} \right). $$
	
	In case (a), it is straightforward to see that $\sigma_{i,j}(s) > 0$ for all $i, j = 0, 1, 2,\ldots,$, with $i+j > 0$, so $\mathcal{J}_{s}$ has no vanishing eigenvalues and the result follows.
	
	In case (b), the functions $\sigma_{i,j}$ admit a zero only if $i \geq 0$ and $j < j_{*}$. If $s_{i,j}$ denotes the zero of such a function, we have
	$$ 0 < s_{i,j} = \left| \frac{\rho_{j}^{(2)} - \frac{R_{\overline{g}^{(2)}}}{m-1}}{\rho_{i}^{(1)} - \frac{R_{g^{(1)}}}{m-1}} \right| \leq \displaystyle\frac{R_{{\overline{g}}^{(2)}}}{\rho_{i}^{(1)} - \frac{R_{{g}^{(1)}}}{m-1} } \longrightarrow 0, \mbox{  as  } i \longrightarrow +\infty$$
	Hence, there is a decreasing sequence $(s_{n})_{n \in \mathbb{N}}$ of degeneracy instants that accumulates on $0$. Since $(s_{n})_{n \in \mathbb{N}}$ accumulates only at zero, for each $n \in \mathbb{N}$, exists $\varepsilon > 0$ such that the interval $[s_{n} - \varepsilon, s_{n} + \varepsilon]$ contains only $s_{n}$ as a degeneracy instant. Arguing as in the proof of Theorem~\ref{thm:principal}, we have $n_{{s_{n}} - \varepsilon} \neq n_{{s_{n}} + \varepsilon}$. The conclusion follows from Theorem~\ref{thm:bif}.
	
	Proceeding in a similar way, in case (c), the functions $\sigma_{i,j}$ admit a zero only if $i < i^*$ and $j \geq 0$, and we have an unbounded increasing sequence of degeneracy instants, each of which is a bifurcation instant for the family $\{\overline{g}_{s}\}_{s>0}$.
\end{proof}

\end{document}